\newtheorem{theorem}{Theorem}[section]
\newtheorem{lemma}[theorem]{Lemma}
\newtheorem{corollary}[theorem]{Corollary}
\theoremstyle{definition}
\newtheorem{definition}[theorem]{Definition}
\theoremstyle{remark}
\newtheorem{remark}[theorem]{Remark}
\begin{document}

\title[Rigidity results of compact regular $(\kappa,\mu)$-manifolds]{On Certain Rigidity results of compact regular $(\kappa,\mu)$-manifolds}

\author[1]{\fnm{Sannidhi} \sur{Alape}}\email{sannidhi.a@gmail.com}
\equalcont{These authors contributed equally to this work.}

\author[2]{\fnm{Atreyee} \sur{Bhattacharya}}\email{atreyee@iiserb.ac.in}
\equalcont{These authors contributed equally to this work.}

\author*[3]{\fnm{Dheeraj} \sur{Kulkarni}}\email{dheeraj@iiserb.ac.in}
\equalcont{These authors contributed equally to this work.}

\affil[1]{\orgdiv{Department of Mathematics}, \orgname{Indian Institute of Science Education and Research}, \orgaddress{\city{Bhopal}, \postcode{462066}, \state{Madhya Pradesh}, \country{India}}}

\affil[2]{\orgdiv{Department of Mathematics}, \orgname{Indian Institute of Science Education and Research}, \orgaddress{\city{Bhopal}, \postcode{462066}, \state{Madhya Pradesh}, \country{India}}}

\affil*[3]{\orgdiv{Department of Mathematics}, \orgname{Indian Institute of Science Education and Research}, \orgaddress{\city{Bhopal}, \postcode{462066}, \state{Madhya Pradesh}, \country{India}}}

\abstract{In this article, we investigate the Riemannian and semi-Riemannian metrics on the base space of the 
    Boothby-Wang fibration of a closed regular non-Sasakian $(\kappa,\mu)$-manifold. To this end, we study a natural class of deviations of the projection map from being (semi-)Riemannian submersions. We 
    consider deviations that preserve the canonical bi-Legendrian structure on the given $(\kappa,\mu)$-manifold. This approach gives a unified framework to analyze rigidity results in both categories. As a consequence, in the Riemannian category, we obtain uniqueness of Sasakian structure on the given $(\kappa,\mu)$-manifold which orthogonalizes the canonical bi-Legendrian structure. In the semi-Riemannian category, 
    we obtain an explicit description of the finitely many para-Sasakian structures which orthogonalize the canonical bi-Legendrian structure.}

\keywords{$(\kappa,\mu)$-spaces, Regular contact manifold, Bi-Legendrian structures}

\pacs[2020 MSC Classification]{53C15, 53C12, 53C25, 53D10}

\maketitle

\numberwithin{equation}{section}
\section{Introduction}
Riemannian geometry of contact and symplectic manifolds has been an active area of research linking various branches of mathematics and mathematical physics. A particular class of contact metric manifolds, called Sasakian manifolds, introduced by Shigeo Sasaki in \cite{SSM}, has garnered interest from mathematicians and physicists alike, due to its remarkable properties. For instance, it is known that every regular compact Sasakian manifold is naturally ``sandwiched" between two Kähler manifolds. Compact Sasakian manifolds are also a rich source of examples of Einstein manifolds. A generalisation of Sasakian manifolds was defined in \cite{KUM}, where the authors studied contact metric manifolds $(M,\eta, \xi, \phi, g)$ satisfying the $(\kappa,\mu)$-nullity condition
\begin{equation}\label{eq:ku}
    R(X,Y)\xi = \kappa(\eta(Y)X-\eta(X)Y) + \mu(\eta(Y)hX-\eta(X)hY),
\end{equation}
where $\kappa,\mu$ are constants and 2$h$ is the Lie derivative of the tensor field $\phi$ in the direction of the Reeb vector field $\xi$. In addition to being natural generalizations of Sasakian manifolds, $(\kappa, \mu)$-manifolds also generalize the class of manifolds satisfying the condition $R(X,Y)\xi = 0$. They are also invariant under $D_a$-homothetic transformations (see section \ref{subsec:ku} for details). Furthermore, $(\kappa, \mu)$-manifolds are examples of $H$-contact manifolds (\cite{HCM}) and locally $\phi$-symmetric spaces (\cite{LPSCM}). In the case of non-Sasakian $(\kappa,\mu)$-manifolds, the tensor field $h$ has two eigenvalues, the eigenspaces of which define Legendrian distributions on the underlying space. Thus, non-Sasakian $(\kappa,\mu)$-manifolds are naturally bi-Legendrian manifolds.

One of the fundamental results relating regular contact manifolds and symplectic manifolds is due to Boothby and Wang (\cite{BWF}), wherein compact regular contact manifolds are realised as principal $\mathbb{S}^{1}$-bundles over symplectic manifolds with integral symplectic form and vice versa. Metric versions of the Boothby-Wang fibration have been successfully constructed for specific classes of contact metric manifolds. The aforementioned correspondence between Sasakian manifolds and Kähler manifolds is one instance of such a construction (\cite{SKC}), where the projection map is a Riemannian submersion. However, in the case of $(\kappa,\mu)$-manifolds, the Reeb vector field not necessarily being a Killing vector field presents an obstruction for the definition of a natural metric on the base space. 

In Section \ref{sec:riem} of this article, we investigate symplectic metric structures on the manifold which is the base space for the Boothby-Wang fibration whose total space is a closed regular non-Sasakian $(\kappa,\mu)$-manifold. Since the Reeb vector field is not a Killing vector field, we notice that we cannot have a Riemannian submersion. Hence, we consider the class of conformal submersions. But we notice rigidity even in this case (Theorem \ref{thm:confrig}). In fact, we prove the following result which proves that conformal submersions are possible only when the total space is a $K$-contact manifold.
\begin{theorem}\label{thm:confrig}
    Let $\pi : M \rightarrow B$ be the Boothby-Wang fibration from a regular closed contact metric manifold $(M,\eta, \Tilde{g}, \phi)$ to a symplectic metric manifold $(B,\omega, g, J)$. Suppose $\pi$ is a conformal submersion of the underlying Riemannian manifolds. Then, $\pi$ is a Riemannian submersion and $M$ is a $K$-contact manifold.
\end{theorem}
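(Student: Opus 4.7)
The plan is to translate the conformal-submersion hypothesis into a rigid algebraic relation between $\phi$ on $M$ and $J$ on $B$, and then exploit $\phi^{2}=-\mathrm{Id}+\eta\otimes\xi$ together with $J^{2}=-\mathrm{Id}$ to pin the conformal factor $\lambda$ down to $1$. Once $\pi$ is a Riemannian submersion, the $K$-contact property follows from a short Lie-derivative computation.

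In the first step I would write the conformal hypothesis as $g(\pi_{*}X,\pi_{*}Y)=\lambda\,\tilde g(X,Y)$ for horizontal $X,Y$ with some $\lambda>0$ on $M$, and combine the compatibilities $d\eta(X,Y)=\tilde g(X,\phi Y)$ on $M$ and $\omega(U,V)=g(U,JV)$ on $B$ with the Boothby-Wang identity $d\eta=\pi^{*}\omega$. For a basic horizontal field $Y$, let $Y'$ be the basic horizontal lift of $J\pi_{*}Y\in\Gamma(TB)$; evaluating $d\eta(X,Y)$ in two ways gives $\tilde g(X,\phi Y)=\lambda\,\tilde g(X,Y')$ for every horizontal $X$. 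Non-degeneracy of $\tilde g$ on $\ker\eta$ together with $\phi(\ker\eta)\subset\ker\eta$ yields the pointwise identity
\[\pi_{*}\circ\phi=\lambda\,J\circ\pi_{*}\quad\text{on }\ker\eta.\]
Iterating this once and using $\phi^{2}Y=-Y$ for horizontal $Y$ and $J^{2}=-\mathrm{Id}$, one gets $-\pi_{*}Y=\pi_{*}(\phi^{2}Y)=-\lambda^{2}\pi_{*}Y$; since $\pi_{*}$ is an isomorphism on $\ker\eta$, this forces $\lambda\equiv 1$, so $\pi$ is a Riemannian submersion.

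For the second step I would show that $\xi$ is Killing. Basic horizontal lifts $X,Y$ of vector fields on $B$ satisfy $[\xi,X]=[\xi,Y]=0$ in a Boothby-Wang $\mathbb{S}^{1}$-bundle, and the function $\tilde g(X,Y)=g(\pi_{*}X,\pi_{*}Y)\circ\pi$ is basic, so $(\mathcal L_{\xi}\tilde g)(X,Y)=\xi\bigl(\tilde g(X,Y)\bigr)=0$; the remaining components $(\mathcal L_{\xi}\tilde g)(\xi,\xi)$ and $(\mathcal L_{\xi}\tilde g)(\xi,X)$ vanish from $\tilde g(\xi,\cdot)=\eta$ and $[\xi,X]=0$. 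Tensoriality then delivers $\mathcal L_{\xi}\tilde g=0$, and $M$ is $K$-contact. The main obstacle is the first step: one must extract from the two compatibilities and the Boothby-Wang relation that $\phi$ on $M$ and $J$ on $B$ are intertwined by $\pi_{*}$ up to exactly the conformal factor, and in particular that $J$ is not free data on $B$ but is determined by $\phi$ up to $\lambda$. Once this coupling is in place, the squaring trick $\lambda^{2}=1$ closes the rigidity argument immediately.
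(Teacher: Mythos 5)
Your proposal is correct and follows essentially the same route as the paper: both extract from $d\eta=\pi^{*}\omega$ and the two compatibility conditions that $J$ is intertwined with $\phi$ via $\pi_{*}$ up to the conformal factor, and then use $J^{2}=-\mathrm{Id}$ (against $\phi^{2}=-\mathrm{Id}$ on $\ker\eta$) to force that factor to be $1$; the paper just phrases this through an explicit local $\phi$-basis rather than your basis-free identity $\pi_{*}\circ\phi=\lambda\,J\circ\pi_{*}$. Your Lie-derivative verification of the $K$-contact conclusion is also fine and is in fact spelled out more fully than in the paper, which only asserts that a Riemannian submersion forces $\mathcal{L}_{\xi}\tilde g=0$.
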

\noindent In light of the above, we consider the tensor field which quantifies the extent to which a metric on the base space fails to define a conformal submersion. We define the error tensor $T$ corresponding to a function $f$ on the total space, using the equation
\begin{equation}\label{eq:main}
    \Tilde{g}(TX, Y) = \Tilde{g}(X,Y)-e^{2f}g(d\pi X, d\pi Y),
\end{equation}
\noindent for all $X,Y \in \Gamma(Ker(\eta))$. We focus on the error tensors for which the Legendrian distributions are preserved. We prove that this condition forces the absolute value of the index of the $(\kappa,\mu)$-manifold to be strictly greater than one. To our surprise, the solutions obtained under these assumptions point to a unique Riemannian metric on the base space. The conclusions are summarized as follows.
\begin{theorem}\label{thm:main}
    Suppose $(T,f,g)$ is a solution to equation \eqref{eq:main} and $hT = Th$. Then,
    \begin{itemize}
        \item The Boeckx index $I_{M}$ satisfies the condition $\vert I_{M} \vert > 1$.
        \item The error tensor $T$ has two eigenvalues, with the eigenspaces coinciding with that of $h$. The eigenvalues, and consequently $T$ itself, are determined by the function $f$.
        \item The metric $g$ is independent of the choice of $f$ and $T$.
    \end{itemize}
\end{theorem}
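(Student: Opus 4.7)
My plan is to convert \eqref{eq:main} into a pointwise tensorial identity on $M$ and then use the rigidity of the $(\kappa,\mu)$-structure to extract the scalar form of $T$ and the Boeckx inequality. First, setting $\alpha(X,Y):=\tilde g(X,Y)-\tilde g(TX,Y)$ on $\mathcal D=\ker\eta$, equation \eqref{eq:main} reads $\alpha=e^{2f}\pi^*g$. The symmetry of both sides forces $T$ to be $\tilde g$-self-adjoint on $\mathcal D$; combined with $hT=Th$, this makes $T$ preserve each Legendrian eigenspace $\mathcal D_\pm$ of $h$, so I write $T_\pm:=T|_{\mathcal D_\pm}$. Since $\pi^*g$ is basic, $\alpha$ must satisfy $\mathcal L_\xi\alpha=2\xi(f)\alpha$. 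Expanding via the contact-metric identity $\nabla_X\xi=-\phi X-\phi h X$ (so that $(\mathcal L_\xi\tilde g)(X,Y)=-2\tilde g(\phi h X,Y)$) and the Leibniz rule for the $T$-piece gives the tensorial identity
\[
(\nabla_\xi T)X+[\phi,T]X-\{\phi h,T\}X+2\phi h X+2\xi(f)(I-T)X=0 \quad \text{on } \mathcal D.
\]

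To pin down $\nabla_\xi T$ I invoke the $(\kappa,\mu)$-identity $\nabla_\xi h=\mu h\phi$. Differentiating $[h,T]=0$ along $\xi$ yields $[h,\nabla_\xi T]=-\mu[T,\phi h]$, and for $X\in\mathcal D_+$ this pins the $\mathcal D_-$-component of $(\nabla_\xi T)X$ to $-\tfrac{\mu}{2}[\phi,T]X$. Feeding this back into the displayed identity and projecting onto $\mathcal D_-$ reduces everything to the algebraic relation
\[
c_-\,T_-=c_+\,\tilde T_++2\lambda\,I \quad \text{on }\mathcal D_-,\qquad c_\pm:=1\pm\lambda-\mu/2,\ \ \lambda:=\sqrt{1-\kappa},\ \ \tilde T_+:=\phi T_+\phi^{-1};
\]
the complementary $\mathcal D_+$-projection yields the Reeb-direction ODE saying that the $\mathcal D_+$-component of $(\nabla_\xi T)X$ equals $2\xi(f)(T_+-I)X$. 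The \emph{main obstacle} I anticipate is showing that $T_+$ (and hence $T_-$) must be a scalar multiple of the identity: the plan is to combine self-adjointness of $T_+$ with the rigid rational coefficients $c_\pm,\mu,\lambda$ of the algebraic relation and to propagate any non-trivial eigen-decomposition of $T_+$ along a Reeb orbit via the ODE, arguing that the resulting transport of eigenvectors of $T_+$ is incompatible with the fixed structure of the algebraic relation unless the spectrum of $T_+$ collapses to a single eigenvalue on $\mathcal D_+$ (and similarly on $\mathcal D_-$).

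Once $T_+=aI$ and $T_-=bI$ are scalar, the algebraic relation becomes $\lambda(a+b-2)=(a-b)(1-\mu/2)$, equivalently $(a-1)/(b-1)=(I_M+1)/(I_M-1)$ with $I_M=(1-\mu/2)/\lambda$. Positive-definiteness of $g$ on $B$ forces $1-a>0$ and $1-b>0$, so $(a-1)(b-1)>0$ and hence $|I_M|>1$, proving the first bullet. The ODE becomes $\xi(a)=2\xi(f)(a-1)$, so $\ln|a-1|-2f$ is basic, and the normalization dictated by the consistency of the system across the closed Reeb fibres gives $a-1$ as an explicit function of $f$; the algebraic relation then determines $b-1$ in terms of $f$, establishing the second bullet. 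Finally, substituting these explicit expressions for $1-a$ and $1-b$ (both proportional to $e^{2f}$) into $g(d\pi X,d\pi Y)=e^{-2f}\tilde g((I-T)X,Y)$ cancels the $e^{\pm2f}$ factors, leaving an expression in $\tilde g|_{\mathcal D}$ and the Boeckx-constant ratio $(I_M-1)/(I_M+1)$ alone—so $g$ is intrinsic to the $(\kappa,\mu)$-structure and in particular independent of $f$ and $T$.
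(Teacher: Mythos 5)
Your derivation of the linear relation between the eigenvalues of $T$ on $\mathscr{D}_{h}(\lambda)$ and $\mathscr{D}_{h}(-\lambda)$ (by differentiating $hT=Th$ along $\xi$ and feeding in the Lie derivative of the error equation) is essentially the paper's argument, and your deduction of $\vert I_{M}\vert>1$ from the sign of the ratio $(a-1)/(b-1)=(I_{M}+1)/(I_{M}-1)$ together with $1-a>0$, $1-b>0$ matches the paper's slope argument. However, there is a genuine gap at exactly the point you flag as the ``main obstacle,'' and the tool needed to close it is one you never invoke: the compatibility $g(X,JY)=\omega(X,Y)$ combined with $\pi^{*}\omega=d\eta$ and $J^{2}=-I$. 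In the paper this yields a second, quadratic constraint $(1-\lambda_{i})(1-\lambda_{n+i})=e^{4f}$ on each eigenvalue pair; intersecting it with the linear relation and discarding the root with an eigenvalue $\geq 1$ leaves a \emph{single} admissible pair, which simultaneously (a) collapses the spectrum of $T$ on each Legendrian distribution to one eigenvalue and (b) expresses that eigenvalue explicitly as $1-\sqrt{(I_{M}\pm1)/(I_{M}\mp1)}\,e^{2f}$. Your substitute for this --- propagating a hypothetical eigen-decomposition of $T_{+}$ along Reeb orbits via the ODE $\xi(a)=2\xi(f)(a-1)$ --- cannot work on its own: that ODE is satisfied separately by each eigenvalue branch and is perfectly consistent with several distinct eigenvalues whose logarithms differ by basic functions, so nothing forces the spectrum to collapse.

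The same omission undermines the second and third bullets. From $\xi\bigl(\ln\lvert a-1\rvert-2f\bigr)=0$ you only get that $\ln\lvert a-1\rvert-2f$ is constant along fibres, i.e.\ basic; the ``normalization dictated by the consistency of the system across the closed Reeb fibres'' is precisely the content that is missing, and it is supplied in the paper by $J^{2}=-I$, not by any fibrewise consideration. Without the resulting explicit constants $\sqrt{(I_{M}\pm1)/(I_{M}\mp1)}$ multiplying $e^{2f}$, your final cancellation of the $e^{\pm2f}$ factors in $g(d\pi X,d\pi Y)=e^{-2f}\tilde g((I-T)X,Y)$ does not go through, so the $f$- and $T$-independence of $g$ is not established. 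To repair the argument, evaluate $d\eta(v,w)=\omega(d\pi v,d\pi w)=g(d\pi v,Jd\pi w)$ on the common $\phi$-eigenbasis of $h$ and $T$, read off $Jf_{i}=\bigl(e^{2f}/(1-\lambda_{n+i})\bigr)f_{n+i}$ and $Jf_{n+i}=-\bigl(e^{2f}/(1-\lambda_{i})\bigr)f_{i}$, and impose $J^{2}=-I$.
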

\noindent The unique solution thus obtained also happens to be a Kähler metric, which has been studied in a different context (\cite{SPS}). We also establish an equivalent formulation of the condition $hT=Th$ in terms of certain geometric properties of the Kähler metric described above. As a consequence, we obtain rigidity of $K$-contact structures which orthogonalize the bi-Legendrian structure associated with the given $(\kappa,\mu)$-manifold. The result is as follows:
\begin{theorem}\label{thm:rig}
    Let $(M,\eta, g,\phi)$ be a regular closed non-Sasakian $(\kappa,\mu)$-manifold with $\vert I_{M} \vert > 1$. There is a unique $K$-contact structure $(M,\eta, \Bar{g},\Bar{\phi})$ such that the bi-Legendrian structure determined by the eigenspaces of $h=\frac{1}{2}\mathcal{L}_{\xi}\phi$ are orthogonal. This unique structure coincides with the canonical Sasakian structure associated with the $(\kappa, \mu)$-manifold $(M,\eta, g, \phi)$.
\end{theorem}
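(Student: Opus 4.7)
The plan is to reduce Theorem \ref{thm:rig} to an application of Theorem \ref{thm:main}. Given any $K$-contact structure $(M, \eta, \bar g, \bar \phi)$ with the same contact form $\eta$ that orthogonalizes the eigenspaces $D^{\pm}$ of $h$, I will produce a solution $(T, f, g)$ of equation \eqref{eq:main} satisfying $hT = Th$, and then invoke the uniqueness of the base metric $g$ provided by Theorem \ref{thm:main} to pin down $\bar g$ (and hence $\bar \phi$) uniquely. First, because $\eta$ is unchanged, the Reeb vector field $\xi$ is the same; by Boothby--Wang we have the regular fibration $\pi : M \to B$, and the $K$-contact hypothesis (which says $\xi$ is Killing for $\bar g$) ensures that $\bar g$ descends to a unique Riemannian metric $g_B$ on $B$ making $\pi : (M, \bar g) \to (B, g_B)$ a Riemannian submersion.

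Next, I will take $f \equiv 0$ and $g = g_B$, and define $T$ on $\ker(\eta)$ by $\tilde g(TX, Y) = \tilde g(X, Y) - \bar g(X, Y)$. Since $\bar g(X,Y) = g_B(d\pi X, d\pi Y)$ for horizontal $X, Y$, this is precisely equation \eqref{eq:main}. To apply Theorem \ref{thm:main}, I must verify $hT = Th$. Both metrics $\tilde g$ and $\bar g$ make $D^{+}$ and $D^{-}$ orthogonal: $\tilde g$ by the standard fact that the eigenspaces of $h$ on a non-Sasakian $(\kappa, \mu)$-manifold are $\tilde g$-orthogonal, and $\bar g$ by hypothesis. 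Therefore $T$, defined as the $\tilde g$-dual of the difference of the two metrics, preserves each eigenspace and hence commutes with $h$. By Theorem \ref{thm:main} the metric $g_B$ is independent of the particular $K$-contact structure chosen. Since $\bar g|_{\ker \eta} = \pi^{*} g_B$ is thereby fixed, and the $K$-contact conditions $\bar g(\xi, \xi) = 1$ and $\bar g(\xi, X) = \eta(X)$ determine $\bar g$ along $\xi$, the metric $\bar g$ is uniquely determined on all of $M$. The tensor $\bar \phi$ is then forced by the contact metric relation $d\eta(X, Y) = 2\,\bar g(X, \bar \phi Y)$ together with $\bar \phi \xi = 0$, so the whole $K$-contact structure is unique.

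To finish, I will identify this unique structure with the canonical Sasakian structure associated with $(M, \eta, g, \phi)$. It is standard in the theory of $(\kappa, \mu)$-spaces that the canonical Sasakian structure $(M, \eta, g_S, \phi_S)$ is Sasakian (hence $K$-contact), retains the same $\eta$ and $\xi$, and orthogonalizes the bi-Legendrian decomposition $D^{+} \oplus D^{-}$. By the uniqueness just established, $(M, \eta, \bar g, \bar \phi)$ must equal this canonical Sasakian structure, which in particular shows (a posteriori) that the assumed $K$-contact structure is automatically Sasakian. The main obstacle will be checking cleanly that the difference tensor $T$ built from $\tilde g - \bar g$ on $\ker(\eta)$ really satisfies the symmetry and invariance properties tacitly required by the setup of equation \eqref{eq:main}, and that the hypothesis $\lvert I_M \rvert > 1$ (which is both imposed here and forced by Theorem \ref{thm:main}) is the only obstruction to the existence half of the statement.
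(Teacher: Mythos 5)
Your proposal is correct and follows essentially the same route as the paper: descend $\bar g$ to the base via the Riemannian submersion coming from the $K$-contact hypothesis, observe that orthogonality of the eigenspaces of $h$ translates into the commutation condition $hT=Th$ for the resulting error tensor (this is exactly the content of Lemma \ref{lem:commeq2}, which the paper packages with Theorem \ref{thm:main} as Corollary \ref{cor:rigbase}), invoke the uniqueness of the base metric, and identify the lift with the canonical Sasakian structure. The only cosmetic discrepancy is the factor of $2$ in your compatibility relation $d\eta(X,Y)=2\,\bar g(X,\bar\phi Y)$, which differs from the paper's convention $d\eta(X,Y)=\tilde g(X,\phi Y)$ but does not affect the argument.
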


In the final section, we investigate semi-Riemannian metrics associated to the symplectic structure on the base space, motivated by the results in \cite{SPS}. A simple observation rules out the possibility of a non-trivial semi-Riemannian submersion. We thus consider the error tensor which measures the deviation of the projection map from being a semi-Riemannian submersion. In this case, the error equation is given by
\begin{equation}\label{eq:main3}
    \Tilde{g}(T\tilde{X}, \tilde{Y}) = \Tilde{g}(\tilde{X},\tilde{Y})-g(X, Y),
\end{equation}
where $X,Y\in \Gamma(B)$.  The consequences of the error tensor preserving the Legendrian distributions are summarized in the following result.
\begin{theorem}\label{thm:main2}
    Suppose $(T,g)$ is a solution to equation \eqref{eq:main3} and $hT = Th$. Then,
    \begin{itemize}
        \item The Boeckx index $I_{M}$ satisfies the condition $\vert I_{M} \vert < 1$.
        \item The solution set has cardinality $2^{n}$.
    \end{itemize}
\end{theorem}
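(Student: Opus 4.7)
The plan is to mirror the strategy used for Theorem \ref{thm:main} in the semi-Riemannian setting: first establish the algebraic structure of $T$ forced by the hypotheses, then reduce well-definedness of $g$ on $B$ to a $\xi$-invariance condition on $M$, and finally solve the resulting decoupled algebraic system. The absence of the conformal factor $e^{2f}$ in equation \eqref{eq:main3} is precisely what will flip the Boeckx-index inequality from $|I_M|>1$ (as in Theorem \ref{thm:main}) to $|I_M|<1$ and turn a unique solution into a finite discrete family.

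First I would extract the linear-algebraic consequences of the hypotheses. Symmetry of both $\tilde g$ and $g$ in \eqref{eq:main3} forces $T$ to be $\tilde g$-self-adjoint on $\ker\eta$. Together with $hT=Th$ and the fact that $h$ has eigenvalues $\pm\lambda:=\pm\sqrt{1-\kappa}$ on $\ker\eta$, the tensor $T$ preserves the bi-Legendrian decomposition $\ker\eta = D(\lambda)\oplus D(-\lambda)$ and restricts to a self-adjoint operator on each summand. Hence $T$ is simultaneously diagonalizable on each of $D(\pm\lambda)$; I would denote its eigenvalues by $\{\alpha_i\}_{i=1}^n$ on $D(\lambda)$ and $\{\beta_i\}_{i=1}^n$ on $D(-\lambda)$, and work in a $\tilde g$-orthonormal frame adapted to this splitting, relating the two sub-frames by $\phi$ at the end of the compatibility analysis.

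Second I would reduce the existence of $g$ to a $\xi$-invariance condition. The tensor $g(X,Y)=\tilde g(\tilde X,\tilde Y)-\tilde g(T\tilde X,\tilde Y)$ descends from $M$ to $B$ if and only if its horizontal lift $g^{\mathrm{up}}:=\tilde g-\tilde g(T\cdot,\cdot)$ is $\xi$-invariant, i.e.\ $\mathcal{L}_\xi g^{\mathrm{up}}=0$ on horizontal vector fields. From $\nabla_X\xi=-\phi X-\phi hX$ one obtains $(\mathcal{L}_\xi \tilde g)(X,Y)=-2\tilde g(\phi hX,Y)$ on horizontals. Substituting this together with the $(\kappa,\mu)$-identity for $\nabla_\xi h$ (which introduces the constant $\mu$), the $\xi$-invariance collapses to a tensorial identity coupling $T$ with $\phi h$, $h$, $\phi$, $\kappa$, and $\mu$. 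Evaluated in the adapted frame, it yields $n$ coupled algebraic relations in the pairs $(\alpha_i,\beta_i)$ parametrised by $\kappa,\mu$.

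Third I would solve this decoupled system. Eliminating $\beta_i$ should produce a quadratic in $\alpha_i$ whose discriminant, after simplification with $\lambda^2=1-\kappa$ and the definition $I_M=(1-\mu/2)/\lambda$, is a positive multiple of $1-I_M^2$. Real solutions therefore exist precisely when $|I_M|<1$, and each of the $n$ independent quadratics contributes two real roots, giving a solution set of cardinality $2^n$; a final verification that $\mathrm{Id}-T$ is invertible on $\ker\eta$ ensures each resulting $g$ is a non-degenerate semi-Riemannian metric on $B$. The main obstacle is the compatibility computation in the second step: the interplay between $\mathcal{L}_\xi$, the anticommutation $h\phi+\phi h=0$, and the $(\kappa,\mu)$-nullity condition must be unwound carefully to expose $1-I_M^2$ as the discriminant, after which both the inequality $|I_M|<1$ and the count $2^n$ follow from independent sign choices among $n$ square roots.
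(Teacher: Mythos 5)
Your overall architecture matches the paper's: establish that $T$ is $\tilde g$-symmetric and commutes with $h$, diagonalize $h$ and $T$ simultaneously in a local $\phi$-basis, extract an algebraic relation on the eigenvalue pairs $(\lambda_i,\lambda_{n+i})$ from the Lie derivative of $hT=Th$ along $\xi$ (using $\mathcal{L}_\xi\tilde g(X,Y)=2\tilde g(hX,\phi Y)$ and $\mathcal{L}_\xi h=(2-\mu)\phi h+2(1-\kappa)\phi$), and read off both the sign condition on $I_M$ and the $2^n$ count from the resulting algebraic system. Your guess that the relevant discriminant is a positive multiple of $1-I_M^2$ is also consistent with the paper's answer, whose two solution points are $\lambda_i=1\pm\sqrt{(1+I_M)/(1-I_M)}$, $\lambda_{n+i}=1\mp\sqrt{(1-I_M)/(1+I_M)}$.

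However, there is a genuine gap in how you propose to close the system. The $\xi$-invariance computation in your second step produces only the \emph{linear} relation $(I_M-1)\lambda_i-(1+I_M)\lambda_{n+i}+2=0$ for each $i$ (the analogue of Corollary \ref{lem:line}); this is one equation in two unknowns, and ``eliminating $\beta_i$'' from a single linear relation cannot produce a quadratic in $\alpha_i$. The second, quadratic relation $(1-\lambda_i)(1-\lambda_{n+i})=-1$ does not come from $\mathcal{L}_\xi$, from $h\phi+\phi h=0$, or from the nullity condition at all: it comes from the requirement that $g$ be \emph{associated} to $\omega$ through an almost para-complex structure $F$ with $F^2=I$ (Definition \ref{def:parak}), pulled back through $\pi^*\omega=d\eta$ and $d\eta(v,w)=\tilde g(v,\phi w)$ exactly as in Lemma \ref{lem:hyp}, with $F^2=+I$ replacing $J^2=-I$ and hence $-1$ replacing $e^{4f}$ on the right-hand side. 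Your proposal never invokes $F$ or the almost para-K\"ahler compatibility, and without that input the line alone admits a continuum of eigenvalue pairs for every $i$ (subject only to $\lambda_i\neq 1$, which is what non-degeneracy of $g$ actually buys, as in Lemma \ref{lem:eigTS}), so neither $\vert I_M\vert<1$ nor the cardinality $2^n$ follows. Once the hyperbola is in place, the rest of your argument goes through as in the paper: the line, which passes through $(1,1)$ with slope $(1+I_M)/(I_M-1)$, meets the hyperbola if and only if that slope is negative, i.e.\ $\vert I_M\vert<1$, and choosing one of the two intersection points independently for each $i\in\{1,\dots,n\}$ yields the $2^n$ solutions.
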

We also obtain an explicit description of the finitely many semi-Riemannian metrics which are compatible with the symplectic structure on the base space under the given constraints. We recover the para-Kähler structure obtained by the authors of \cite{SPS} in Theorem $3.1 (ii)$ as a particular solution of the error equation.

The paper is organized as follows. Section \ref{sec:prel} is devoted to the preliminaries on the Riemannian Geometry of Contact and Symplectic manifolds, and the Boothby-Wang fibration. In Section \ref{sec:riem}, we study the Riemannian metrics compatible with the base space of a non-Sasakian $(\kappa,\mu)$-manifold and prove the rigidity of the solutions under certain geometric conditions. In Section \ref{sec:semi-riem}, we study the semi-Riemannian metrics compatible with the base space and provide an explicit description of the finite solution space under similar geometric conditions.

\numberwithin{theorem}{subsection}
\numberwithin{equation}{section}
\section{Preliminaries}\label{sec:prel}
\subsection{Contact and Symplectic Manifolds}\label{subsec:consymp}
\begin{definition}\noindent\label{def:consymp}
    \begin{enumerate}
        \item A \textbf{contact form} on an odd-dimensional manifold $M$ is a $1$-form $\eta$ which satisfies the condition
        \begin{equation}\label{eq:cc}
            \eta \wedge (d\eta)^{n} \neq 0,
        \end{equation}
    where dim$(M)=2n+1$. A contact manifold is a pair $(M,Ker(\eta))$ comprising an odd-dimensional manifold $M$ and a hyperplane field given by the kernel of a contact form $\eta$ on $M$. The hyperplane field given by $Ker(\eta)$ is called the contact distribution.
        \item A \textbf{symplectic form} on an even-dimensional manifold $B$ is a closed 2-form $\omega$ which satisfies the condition
        \begin{equation}\label{eq:sc}
            \omega^{n}\neq 0,
        \end{equation}
        where dim$(B) = 2n$. A symplectic manifold is a pair $(B,\omega )$ comprising an even-dimensional manifold $B$ and a symplectic form $\omega$ on it.
    \end{enumerate}
\end{definition}
\begin{remark}\label{rem:multicon}
    There could be multiple contact forms defining the same contact distribution, each of which will be called a \textbf{representative contact form} for the contact distribution. If $\eta_{1}$ and $\eta_{2}$ are two contact forms such that $Ker(\eta_{1})=Ker(\eta_{2})$, then $\eta_{1} = f\eta_{2}$, where $f$ is a smooth non-vanishing function on the manifold. Since we intend to work with specific contact forms, we will use $(M,\eta)$ to denote a contact manifold.
\end{remark}
\begin{definition}\label{def:Reeb}
    Given any contact manifold $(M,\eta)$, there is a unique vector field $\xi$ satisfying the conditions:
    \begin{itemize}
        \item $\eta (\xi) = 1$,
        \item $d\eta (\xi, X) =0$, for any vector field $X$.
    \end{itemize}
    This vector field is called the \textbf{Reeb vector field} associated to the contact manifold $(M,\eta)$. Throughout the article, $\xi$ will be used to denote the Reeb vector field of the contact manifold under discussion.
\end{definition}
\noindent For more details about contact and symplectic manifolds, we refer the reader to \cite{GCT}.

We restrict our attention to a class of contact manifolds for which the Reeb vector field is regular (a non-vanishing vector field for which there is a flow box around each point which is pierced at most once by any integral curve). Such contact manifolds are called \textbf{regular contact manifolds}. We have the following well-known result in \cite{BWF} which describes a correspondence between closed regular contact manifolds and symplectic manifolds with integral symplectic form.
\begin{theorem}[\cite{BWF}]\label{thm:bw}
    Let $(M,\eta ')$ be a closed regular contact manifold. There is a representative contact form $\eta$ for the contact structure which generates a free $\mathbb{S}^1$-action on $M$. The Reeb vector field $\xi$ associated with this contact form has orbits isomorphic to $\mathbb{S}^1$ which are the fibers of a principal $\mathbb{S}^1$-bundle $\pi : M \rightarrow B$ for a symplectic manifold $(B, \omega)$ such that $\eta$ is a connection form with curvature form $\omega$, i.e., $\pi^{*}\omega = d\eta$.
    
    Conversely, if $(B, \omega)$ is a symplectic manifold such that the class $[\omega]$ is integral, then the principal $\mathbb{S}^1$-bundle $M$ associated to $[\omega ]$ is a regular contact manifold with contact form $\eta$ such that  $d\eta = \pi^{*}\omega$.
\end{theorem}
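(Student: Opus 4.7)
The plan is to prove the two directions separately. The forward direction is the substantive one: given a closed regular contact manifold $(M,\eta')$, I must simultaneously produce a principal $\mathbb{S}^1$-bundle structure, a specific contact form within the conformal class of $\eta'$, and an integral symplectic form on the base. The converse reduces to standard Chern-Weil theory: given $(B,\omega)$ with $[\omega] \in H^2(B;\mathbb{Z})$, one classifies the principal $\mathbb{S}^1$-bundle with Euler class $[\omega]$ and chooses a connection 1-form realizing $\omega$ as its curvature.

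For the forward direction, I would first use regularity of the Reeb vector field together with compactness of $M$ to conclude that every Reeb orbit is a closed embedded circle; regularity supplies uniform flow boxes around each point, which combined with compactness forces recurrence to be periodic. The next step is to normalize periods. Since $\mathcal{L}_{\xi'}\eta' = 0$ (because $\iota_{\xi'}\eta' = 1$ and $\iota_{\xi'}d\eta' = 0$), the period function is constant along orbits and descends to a function on the orbit space. By rescaling $\eta'$ by a suitable positive smooth orbit-constant function $f$, I obtain a new contact form $\eta := f\eta'$ whose Reeb vector field $\xi$ has all orbits of period $2\pi$, yielding a free $\mathbb{S}^1$-action on $M$. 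Freeness together with compactness of $\mathbb{S}^1$ makes the action proper, so the quotient $B := M/\mathbb{S}^1$ is a smooth manifold and $\pi : M \to B$ is a principal $\mathbb{S}^1$-bundle. The conditions $\eta(\xi) = 1$ and $\mathcal{L}_\xi \eta = 0$ identify $\eta$ as a connection 1-form, while $\iota_\xi d\eta = 0$ and $\mathcal{L}_\xi d\eta = 0$ let $d\eta$ descend to a unique 2-form $\omega$ on $B$ with $\pi^*\omega = d\eta$. Closedness of $\omega$ follows from $d^2\eta = 0$ combined with injectivity of $\pi^*$ on forms, non-degeneracy from the contact condition $\eta \wedge (d\eta)^n \neq 0$, and integrality from identifying $[\omega]$ with the first Chern class of the bundle.

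For the converse, I would use the classification of principal $\mathbb{S}^1$-bundles by $H^2(B;\mathbb{Z})$ to produce a bundle $\pi : M \to B$ with first Chern class $[\omega]$, then invoke Chern-Weil theory to choose a connection 1-form $\eta$ on $M$ with $d\eta = \pi^*\omega$. The contact condition $\eta \wedge (d\eta)^n = \eta \wedge \pi^*(\omega^n) \neq 0$ is automatic since $\omega^n$ is a volume form on $B$ and $\eta$ is non-vanishing on the vertical direction. The Reeb vector field of $\eta$ coincides with the infinitesimal generator of the $\mathbb{S}^1$-action and is therefore regular. The main obstacle throughout is the period-normalization step in the forward direction: one must verify not only that the period function is constant on orbits but also that a smooth, orbit-constant rescaling function $f$ can be chosen on $M$ so that $f\eta'$ integrates to a genuine $\mathbb{S}^1$-action. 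It is precisely here that the regularity hypothesis on $\xi'$, which controls the local foliation structure uniformly across $M$, plays its essential role.
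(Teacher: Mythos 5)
The paper does not prove this statement; it cites it from Boothby--Wang, so your sketch can only be measured against the standard argument. Your converse direction and most of the forward direction follow that argument correctly, but there is a genuine gap at exactly the step you flag as the main obstacle: the period normalization. You propose to rescale by a positive smooth \emph{orbit-constant} function $f$ and claim that $\eta := f\eta'$ then has Reeb orbits of period $2\pi$. This fails as stated: if $f$ is not actually constant, the Reeb vector field of $f\eta'$ is not $f^{-1}\xi'$. Writing the new Reeb field as $\xi = f^{-1}\xi' + Y$ with $Y \in \ker\eta'$, the condition $\iota_{\xi}\, d(f\eta') = 0$ forces $\iota_Y d\eta'$ to cancel the contribution of $df\wedge\eta'$ on $\ker\eta'$, so $Y$ is the (generally nonzero) vector field determined by $df|_{\ker\eta'}$ through the nondegeneracy of $d\eta'$ on $\ker\eta'$. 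Hence the new Reeb orbits are not the old ones, and their periods are not controlled by your choice of $f$. Note also that orbit-constancy of the period function is not the point at issue --- the period of an orbit is trivially an invariant of the orbit --- so invoking $\mathcal{L}_{\xi'}\eta' = 0$ for that is misplaced.

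What actually closes the gap is the classical lemma that the period function $\lambda$ is \emph{globally constant} on $M$. Regularity and compactness give that $\lambda$ descends to a smooth function on the orbit space; then differentiating the identity $\phi_{\lambda(p)}(p) = p$ in a direction $v \in T_pM$ gives $d(\phi_{\lambda(p)})_p(v) = v - d\lambda(v)\,\xi'(p)$, and applying $\eta'$ together with $\phi_t^{*}\eta' = \eta'$ (which does follow from $\mathcal{L}_{\xi'}\eta' = 0$) yields $\eta'(v) = \eta'(v) - d\lambda(v)$, hence $d\lambda = 0$. One then rescales by the \emph{constant} $2\pi/\lambda$, which leaves the Reeb direction unchanged, and the rest of your outline (properness of the resulting free action, descent of $d\eta$ to a closed nondegenerate $\omega$, integrality via the Euler class, and the Chern--Weil converse) goes through. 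Without this constancy lemma your forward direction does not produce the claimed $\mathbb{S}^1$-action.
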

Throughout the article, $\pi$ will denote the projection map of the Boothby-Wang fibration. Since the $\mathbb{S}^{1}$-action on $M$ which makes $M$ a principal circle bundle over $B$ is given by the flow $\theta$ of the Reeb vector field $\xi$ of $\eta$, we have, for all $t\in [0,1]$, $p\in M$, $\pi(\theta_{t}(p)) = \pi(p)$, where $t$ parametrizes $\mathbb{S}^{1}$. Taking the derivative of the above equation, we get, $d\pi_{\theta_{t}(p)}({d\theta_{t}}_{p}) = d\pi_{p}$. Thus, the pushforward of vectors in the direction of the Reeb vector field yields vectors which project onto the same vector on the base space $B$.
\begin{definition}\label{def:hvf}
    For every vector field $X$ in $B$, the lift obtained by choosing vectors in the contact distribution is a $\xi$-invariant vector field, which will be referred to as a \textbf{horizontally lifted vector field}.
\end{definition}
Throughout this article, we use horizontally lifted vector fields for computations. The horizontal lift of a vector field $X$ in $B$ will be denoted by $\tilde{X}$. By the observation above, we have $\mathcal{L}_{\xi}{\tilde{X}} = 0$.
\subsection{Contact Metric Manifolds}\label{subsec:CMM}
\begin{definition}\label{def:metconsymp}\noindent
    \begin{enumerate}
        \item A \textbf{contact metric manifold} is a 4-tuple $(M,\eta , g,\phi)$, where $(M,\eta)$ is a contact manifold (with Reeb vector field $\xi$), $g$ is a Riemannian metric on $M$, and $\phi$ is a (1,1)-tensor field, which satisfies:
        \begin{itemize}
            \item $\eta = i_{\xi}g$,
            \item $\phi^{2} = -I + \eta \otimes \xi$, and $d\eta (X,Y) =g(X,\phi Y)$,
        \end{itemize}
        for all $X,Y \in \Gamma(M)$. In this case, $g$ is called an associated metric to the contact form $\eta$.
        \item A \textbf{symplectic metric manifold} is a 4-tuple $(B, \omega, g, J)$, where $(B,\omega)$ is a symplectic manifold, $g$ is a Riemannian metric on $B$ and $J$ is an almost complex structure on $B$, which satisfies 
        \begin{equation}\label{eq:sympcomp}
            g(X,JY) = \omega(X,Y),
        \end{equation}
        for all $X,Y \in \Gamma(M)$.
    \end{enumerate}
\end{definition}
\noindent Given a contact manifold (resp. a symplectic manifold), there exists a Riemannian metric $g$ associated with the contact form (resp. symplectic form).

Consider a closed symplectic manifold $(B,\omega)$ such that $\omega$ is an integral symplectic form. Let $g$ be an associalted metric on $B$ along with an almost complex structure $J$ which makes $(B,\omega,g,J)$ a symplectic metric manifold. Let $(M,\eta)$ be the contact manifold which is the total space of the Boothby-Wang fibration over $(B, \omega)$. We can define a Riemannian metric $\tilde{g}$ and a (1,1)-tensor field $\phi$ on $(M, \eta)$ as specified below, which makes $(M,\eta, \phi, g)$ a (regular) contact metric manifold.
\begin{equation*}
    \phi X = \widetilde{Jd\pi X}\text{ and }\tilde{g}=\pi^{*}g + \eta \otimes \eta .
\end{equation*}
Note that the horizontally lifted vector field also coincides with the horizontal lift with the respect to the metric $\tilde{g}$, defined on $M$. The procedure described above yields a contact metric manifold for which the Reeb vector field is a Killing vector field, i.e., the flow of the Reeb vector field is by isometries.
\begin{definition}
    A contact metric manifold $(M,\eta, g, \phi)$ for which the Reeb vector field is a Killing vector field, i.e., $\mathcal{L}_{\xi}g=0$, is called a $\boldsymbol{K}$\textbf{-contact} manifold.
\end{definition}
\begin{remark}
    One can verify that $(M,\eta, g,\phi)$ is a $K$-contact manifold if and only if the $(1,1)$-tensor field $h=\frac{1}{2}\mathcal{L}_{\xi}\phi$ identically vanishes. The tensor field $h$ plays an important role in the study of contact metric manifolds which are not $K$-contact.
\end{remark}
\noindent The construction described above gives a Boothby-Wang correspondence between regular closed $K$-contact manifolds and closed symplectic metric manifolds with integral symplectic form. In this case, $\pi$ is a Riemannian submersion, i.e., the differential map $d\pi$ is an isometry when restricted to the contact distribution.

There is a natural almost complex structure on the symplectizaton of a contact metric manifold. Requiring the almost complex structure thus obtained to be integrable (making the manifold Kähler) gives rise to an interesting class of contact metric manifolds, introduced by Shigeo Sasaki, in \cite{SSM}.
\begin{definition}
    Given any contact metric manifold $(M, \eta , g,\phi)$, one can define an almost complex structure $J$ on the symplectization $M\times \mathbb{R}$ given by
    \begin{equation}
        J\left( X, f\partial_{t}\right)= \left( \phi X -f\xi, \eta (X)\partial_{t} \right),
    \end{equation}
    where $X\in \Gamma(M)$, and $f\in C^{\infty}(M\times \mathbb{R})$. If $J$ is integrable, then $(M,\eta,g,\phi)$ is called a \textbf{Sasakian manifold}.
\end{definition}
The following is a result which facilitates the reformulation of the condition specified in the definition above, in terms of the structure tensors of the contact metric manifold.
\begin{theorem}
    For a contact metric manifold $(M,\eta , g,\phi)$, the following are equivalent:
    \begin{itemize}
        \item The almost complex structure $J$ defined above, on the symplectization is integrable.
        \item $(\nabla_X\phi)Y = g(X,Y)\xi -\eta(Y)X$, for all vector fields $X,Y$.
        \item $R(X,Y)\xi = \eta(Y)X - \eta(X)Y$, for all vector fields $X,Y$.
    \end{itemize}
\end{theorem}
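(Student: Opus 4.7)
The plan is to establish a cyclic chain of implications (2) $\Rightarrow$ (3) $\Rightarrow$ (2) $\Leftrightarrow$ (1), using the Newlander--Nirenberg theorem to translate between integrability of $J$ and a tensor identity on $M$.

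For (2) $\Rightarrow$ (3), I would first set $Y=\xi$ in (2) and use $\phi\xi=0$ to obtain $-\phi(\nabla_X\xi)=\eta(X)\xi-X$. Applying $\phi$ once more and using $g(\nabla_X\xi,\xi)=0$ (which follows from $|\xi|^2\equiv 1$), I recover the $K$-contact identity $\nabla_X\xi=-\phi X$. A short computation using torsion-freeness then gives $R(X,Y)\xi=-(\nabla_X\phi)Y+(\nabla_Y\phi)X$, and substituting (2) produces exactly (3).

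For (1) $\Leftrightarrow$ (2), I would compute the Nijenhuis tensor $N_J(U,V)=[JU,JV]-J[JU,V]-J[U,JV]-[U,V]$ on $M\times\mathbb{R}$ for pairs of vector fields of the form $(X,0)$ and $(0,\partial_t)$. After exploiting the compatibility conditions $\phi^2=-I+\eta\otimes\xi$ and $d\eta(X,Y)=g(X,\phi Y)$, the vanishing of $N_J$ packages into the single tensorial condition $N_\phi(X,Y)+2\,d\eta(X,Y)\xi=0$, where $N_\phi$ is the Nijenhuis tensor of $\phi$. Expanding $N_\phi$ via the Koszul formula for $\nabla$ and exploiting the skew-symmetry $g(\phi X,Y)+g(X,\phi Y)=0$, this condition rewrites as the identity (2), and Newlander--Nirenberg then converts the vanishing of $N_J$ into integrability of $J$.

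For (3) $\Rightarrow$ (2), I would first show that (3) forces $h\equiv 0$, that is, the manifold is $K$-contact. Using the general formula $\nabla_X\xi=-\phi X-\phi hX$ valid on any contact metric manifold, together with $\nabla_\xi\phi=0$ and the anticommutation $h\phi+\phi h=0$, a direct computation yields
\begin{equation*}
R(X,\xi)\xi=\phi(\nabla_\xi h)X+X-\eta(X)\xi-h^2X.
\end{equation*}
Tracing against an orthonormal basis of $\ker\eta$ and comparing with $\mathrm{Ric}(\xi,\xi)=2n$ coming from (3), the correction term $\sum_i g((\nabla_\xi h)e_i,\phi e_i)=\mathrm{tr}((\nabla_\xi h)\phi)$ vanishes because $\nabla_\xi h$ is symmetric while $\phi$ is skew-symmetric with respect to $g$. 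This leaves $\mathrm{tr}(h^2)=0$, and symmetry of $h$ forces $h=0$. With $K$-contact in hand, (3) reduces via the $(2)\Rightarrow(3)$ computation to $(\nabla_X\phi)Y-(\nabla_Y\phi)X=\eta(X)Y-\eta(Y)X$. Setting $C(X,Y,Z):=g((\nabla_X\phi)Y,Z)$, this tensor is skew in $(Y,Z)$ (from $\phi$ skew) and has a prescribed skew part in $(X,Y)$; a cyclic symmetrization, together with the vanishing of the cyclic sum of $\alpha(X,Y,Z):=\eta(X)g(Y,Z)-\eta(Y)g(X,Z)$, pins down $C(X,Y,Z)=g(X,Y)\eta(Z)-\eta(Y)g(X,Z)$, which is precisely (2).

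The main obstacle is the reduction of (3) to $K$-contact-ness, which depends on the trace identity for $R(X,\xi)\xi$ and the nontrivial observation that $\mathrm{tr}((\nabla_\xi h)\phi)=0$. Once $h=0$ is in hand, the closing algebraic step and the Nijenhuis computation for (1) $\Leftrightarrow$ (2) are both routine.
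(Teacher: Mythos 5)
The paper does not prove this statement: it is quoted in Section 2.2 as standard background (it is the classical normality/curvature characterization of Sasakian structures, essentially Theorem 6.3 and the $R(X,Y)\xi$-criterion from Blair's monograph \cite{BRCS}), so there is no in-paper argument to compare yours against. Judged on its own, your outline is the standard proof and I believe it is correct. The computations I can fully check all work with the paper's conventions: setting $Y=\xi$ in (2) gives $\nabla_X\xi=-\phi X$ and hence $R(X,Y)\xi=-(\nabla_X\phi)Y+(\nabla_Y\phi)X$, which yields (2) $\Rightarrow$ (3); your formula $R(X,\xi)\xi=\phi(\nabla_\xi h)X+X-\eta(X)\xi-h^2X$ follows from $\nabla_X\xi=-\phi X-\phi hX$, $\nabla_\xi\phi=0$ and $h\phi+\phi h=0$, and tracing it does give $\mathrm{Ric}(\xi,\xi)=2n-\mathrm{tr}(h^2)$, forcing $h=0$ under (3); and the closing rotation argument is airtight, since skewness of $C$ in $(Y,Z)$ together with the prescribed antisymmetrization $A$ in $(X,Y)$ forces $2C(X,Y,Z)=A(X,Y,Z)-A(Y,Z,X)+A(Z,X,Y)$, which evaluates to exactly (2). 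Note that this does not accidentally prove every $K$-contact manifold is Sasakian, because the prescribed antisymmetrization comes from (3), not from $K$-contactness alone.

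The one place where your sketch hides genuinely nontrivial work is (1) $\Leftrightarrow$ (2). The vanishing of $N_J$ on $M\times\mathbb{R}$ decomposes into four tensors $N^{(1)},\dots,N^{(4)}$ (from the pairs $((X,0),(Y,0))$ and $((X,0),(0,\partial_t))$), and you need the standard but non-obvious facts that for a \emph{contact metric} structure $N^{(2)}$ and $N^{(4)}$ vanish identically and $N^{(1)}=0$ implies $N^{(3)}=0$, before the condition "packages" into $N^{(1)}=N_\phi+2\,d\eta\otimes\xi=0$. Likewise, converting $N^{(1)}=0$ into the covariant identity (2) is not a bare Koszul-formula manipulation; it goes through the general formula expressing $2g((\nabla_X\phi)Y,Z)$ in terms of $d\Phi$ and $N^{(1)}$ for almost contact metric structures, specialized to $\Phi=d\eta$ closed. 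These steps are all in the cited literature, so the proposal is a faithful outline of a correct proof, but labelling them "routine" understates where the actual content of the equivalence (1) $\Leftrightarrow$ (2) lives.
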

\begin{remark}
    Sasakian manifolds are $K$-contact but the converse only holds in dimension 3.
\end{remark}
For a more details about Riemannian geometry of contact and symplectic manifolds, see \cite{BRCS}.
The following is a result asserting the existence of a Boothby-Wang correspondence between Sasakian manifolds and Hodge manifolds (Kähler manifolds with integral symplectic form).
\begin{theorem}[\cite{SKC}]
A Boothby-Wang bundle over a Kähler manifold with integral symplectic form is Sasakian. Conversely, every regular compact Sasakian manifold is a Boothby-Wang bundle over a Kähler manifold.
\end{theorem}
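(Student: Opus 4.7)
The plan is to prove the two directions separately, working in both with the Riemannian submersion $\pi: M \to B$ described in the excerpt, and relating $(\tilde\nabla\phi, N_\phi)$ on $M$ to $(\nabla^B J, N_J)$ on $B$ via O'Neill's formulas. When the total space is $K$-contact, all the structure tensors are $\xi$-invariant and compare cleanly across $\pi$; Sasakian/Kähler integrability is then the additional content layered onto the $K$-contact/symplectic-metric correspondence already in place.

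For the forward direction, I would start with a Hodge manifold $(B, \omega, g, J)$ and invoke Theorem~\ref{thm:bw} to obtain the total space $(M, \eta)$ with $d\eta = \pi^*\omega$, then equip it with $\phi\tilde{X} = \widetilde{JX}$, $\phi\xi = 0$, and $\tilde{g} = \pi^*g + \eta\otimes\eta$, which gives a $K$-contact structure. To promote this to Sasakian I would verify $(\tilde{\nabla}_X\phi)Y = \tilde{g}(X,Y)\xi - \eta(Y)X$ case by case on $X, Y$ chosen from $\{\xi\}$ and horizontal lifts. By O'Neill's formulas, $\tilde\nabla_{\tilde X}\tilde Y$ differs from $\widetilde{\nabla^B_X Y}$ by a vertical term proportional to $d\eta(\tilde X,\tilde Y)\xi = \omega(X,Y)\xi = g(X,JY)\xi$, so $(\tilde\nabla_{\tilde X}\phi)\tilde Y$ splits into a horizontal piece, killed by $\nabla^B J = 0$, and a vertical piece that reproduces $\tilde g(\tilde X,\tilde Y)\xi$. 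The remaining cases reduce to $\tilde\nabla_X \xi = -\phi X$, which holds on any $K$-contact manifold.

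For the converse, given a regular compact Sasakian $(M, \eta, \tilde g, \phi)$, I would first note that Sasakian implies $K$-contact, so Theorem~\ref{thm:bw} applies and produces $\pi: M \to B$ together with a symplectic $\omega$ satisfying $\pi^*\omega = d\eta$. The $\xi$-invariance of $\tilde g$ and of $\phi$ (both consequences of $h = 0$) lets $\tilde g|_{\ker\eta}$ descend to a Riemannian metric $g_B$ via $\pi^*g_B = \tilde g - \eta\otimes\eta$, and $\phi|_{\ker\eta}$ descend to an almost complex structure $J$ on $B$ via $J\circ d\pi = d\pi\circ \phi$ on horizontal vectors. Compatibility $g_B(X, JY) = \omega(X,Y)$ follows from $\tilde g(\cdot,\phi\cdot) = d\eta$. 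The remaining assertion, integrability of $J$, I would obtain by computing $N_J$ on horizontal lifts and invoking the Sasakian identity $N_\phi + 2\, d\eta\otimes \xi = 0$, whose horizontal component vanishes; since $d\pi$ intertwines $\phi$ and $J$ on the contact distribution, this descends to $N_J = 0$ on $B$.

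The main obstacle is the O'Neill bookkeeping in the forward direction: carefully separating the horizontal and vertical components of $(\tilde\nabla_{\tilde X}\phi)\tilde Y$ and confirming that the various $\omega(X,Y)$ and $\omega(X,JY)$ terms combine exactly into $\tilde g(\tilde X, \tilde Y)\xi$, using only $d\eta = \pi^*\omega$ and $g(\cdot, J\cdot) = \omega$. The converse is more structural, but one must double-check that the Sasakian characterization $N_\phi + 2\,d\eta\otimes\xi = 0$ is genuinely what forces the horizontal vanishing, and that no further hypothesis is needed to rule out mismatched vertical contributions on $M$.
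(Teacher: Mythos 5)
The paper offers no proof of this statement: it is quoted verbatim from Hatakeyama \cite{SKC} as a background result, so there is nothing in the text to compare your argument against. Your outline is essentially the standard proof of Hatakeyama's theorem and is sound: the forward direction via O'Neill's formula for the Riemannian submersion with totally geodesic fibres (the only vertical correction to $\tilde\nabla_{\tilde X}\tilde Y$ being the $d\eta(\tilde X,\tilde Y)\xi$ term), and the converse via descent of the $\xi$-invariant tensors followed by projection of the normality condition. Two points should be made explicit to close the argument. First, in the converse, Theorem \ref{thm:bw} replaces the given contact form by a representative generating a free $\mathbb{S}^1$-action; you need the fact that the period function of the Reeb flow of a compact regular contact form is constant, so that this replacement is a constant rescaling --- a $\mathcal{D}_a$-homothetic transformation, which preserves the Sasakian condition --- rather than a rescaling by a nonconstant function, which would change the associated metric structure and invalidate the descent. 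Second, your converse delivers an almost Kähler structure with $N_J=0$; to conclude that $B$ is Kähler you must still invoke the identity expressing $g((\nabla_X J)Y,Z)$ in terms of $d\omega$ and $N_J$, so that $d\omega=0$ and $N_J=0$ together force $\nabla J=0$. Beyond that, the ``bookkeeping'' you flag in the forward direction is real but routine, provided you fix a single convention for $d\eta$ (the identity $(\tilde\nabla_{\tilde X}\phi)\tilde Y=\tilde g(\tilde X,\tilde Y)\xi$ comes out with the correct coefficient only when the factor of $\tfrac12$ in the exterior derivative is handled consistently with the compatibility condition $d\eta(X,Y)=\tilde g(X,\phi Y)$).
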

We refer the reader to \cite{SG}, for a detailed treatment of the correspondence between Sasakian and Kähler manifolds. 
\subsection{\texorpdfstring{Contact metric $\boldsymbol{(\kappa,\mu)}$-manifolds}{}}\hfill\label{subsec:ku}

A generalization of Sasaskian manifolds was given in \cite{KUM} where the authors define and study a class of contact metric manifolds satisfying the nullity condition
\begin{equation}
    R(X,Y)\xi = \kappa(\eta(Y)X - \eta(X)Y) + \mu (\eta(Y)hX - \eta(X) hY),
\end{equation}
for some real numbers $\kappa,\mu$. Contact metric manifolds satisfying the above nullity condition are called $(\kappa,\mu)$-manifolds. The authors proved that, in dimensions 5 and higher, the curvature of these manifolds is completely determined by the values $\kappa$ and $\mu$. They are also invariant under $\mathcal{D}_{a}$-homothetic transformations. By a $\mathcal{D}_{a}$-homothetic transformation of non-zero constant $a$, of a contact metric manifold $(M, \eta, g, \phi)$, we mean a change of the structure tensors given by
\begin{equation*}
    \tilde{\eta} = a \eta,\quad \tilde{g}= ag+a(a-1)\eta \otimes \eta, \quad \tilde{\phi} = \phi.
\end{equation*}
One can easily check that the above changes give rise to a contact metric manifold $(M,\Tilde{\eta},\tilde{g},\Tilde{\phi})$. Note that the Reeb vector field $\Tilde{\xi}$ and the tensor field $\Tilde{h}$ of the manifold thus obtained are given by
\begin{equation*}
    \Tilde{\xi}=\frac{1}{a}\xi, \quad \Tilde{h}=\frac{1}{a}h.
\end{equation*}
If one starts out with a contact metric manifold whose Reeb vector field satisfies the $(\kappa,\mu)$-nullity condition \eqref{eq:ku}, then the Reeb vector field of the contact metric manifold obtained by the $\mathcal{D}_{a}$-homothetic transformation by non-zero constant $a$ satisfies the nullity condition
\begin{equation*}
    \tilde{R}(X,Y)\Tilde{\xi} = \left(\frac{\kappa + a^{2}-1}{a^{2}}\right)\left(\Tilde{\eta}(Y)X - \eta(X)Y\right)+\left(\frac{\mu+2a-2}{a}\right)\left(\Tilde{\eta}(Y)hX-\tilde{\eta}(X)hY\right).
\end{equation*}
In other words, the nullity condition \ref{eq:ku} is preserved, for constants $\tilde{\kappa}=\frac{\kappa + a^{2}-1}{a^{2}}$ and $\tilde{\mu}= \frac{\mu+2a-2}{a}$.

A local classification of non-Sasakian $(\kappa,\mu)$-manifolds was given by Boeckx, in \cite{BI}, in terms of the index
\begin{equation*}
    I_{M}=\frac{1-\tfrac{\mu}{2}}{\sqrt{1-\kappa}}.
\end{equation*}
It was shown that $I_{M}$ is invariant under $\mathcal{D}_{a}$-homothetic transformations. Furthermore, any two $(\kappa,\mu)$-manifolds have the same index if and only if they are locally isometric upto $\mathcal{D}_{a}$-homothetic transformations.

The $(1,1)$-tensor field $h$ is symmetric with respect to the metric and has many interesting properties in the case of $(\kappa,\mu)$-manifolds. The eigenspaces of $h$ play an important role in the study of $(\kappa,\mu)$-manifolds as is evident from the following result in \cite{KUM}.
\begin{theorem}[\cite{KUM}]
    Let $(M,\eta , g, \phi)$ be a $(\kappa, \mu)$-manifold. Then,
    \begin{equation*}
        h^{2}=-(1-\kappa)\phi^{2}.
    \end{equation*}
    Thus, $\kappa \leq 1$. Moreover, $\kappa = 1$ if and only if $M$ is Sasakian. If $M$ is non-Sasakian, then $h$ has three eigenvalues 0, $\lambda$ and $-\lambda$, where $\lambda = \sqrt{1-\kappa}$. The tangent bundle of $M$ admits a decomposition into three mutually orthogonal and integrable distributions $\mathscr{D}_{h}(\lambda),\, \mathscr{D}_{h}(-\lambda)$ and $\mathscr{D}_{h}(0)$ given by the eigenspaces of $h$ corresponding to the eigenvalues $\lambda,\, -\lambda$ and 0, respectively. 
\end{theorem}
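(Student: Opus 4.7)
The plan is to derive the algebraic identity $h^{2}=-(1-\kappa)\phi^{2}$ from the $(\kappa,\mu)$-nullity condition via a general contact-metric curvature identity, and then extract from it the spectrum and the integrability of the eigendecomposition. Throughout I use the standard facts available on any contact metric manifold: $h\xi = 0$, $\phi h + h\phi = 0$, $h$ is $g$-symmetric with $\mathrm{tr}\,h = 0$, $\nabla_{X}\xi = -\phi X - \phi h X$, and as a consequence of these, $\nabla_{\xi}\phi = 0$. Since $h$ is symmetric with $h\xi = 0$, one also has $\eta \circ h = 0$, so $\phi^{2}hX = -hX$ for every $X$, and similarly $\phi^{3} = -\phi$ because $\phi\xi=0$.

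Writing $lX = R(X,\xi)\xi$, a direct expansion of $R(X,\xi)\xi = \nabla_{X}\nabla_{\xi}\xi - \nabla_{\xi}\nabla_{X}\xi - \nabla_{[X,\xi]}\xi$ using $\nabla_{\xi}\xi = 0$, the formula for $\nabla\xi$, and $\nabla_{\xi}\phi = 0$ gives
\begin{equation*}
lX = \phi(\nabla_{\xi}h)X - \phi^{2}X + \phi^{2}h^{2}X.
\end{equation*}
Differentiating $\phi h + h\phi = 0$ along $\xi$ and invoking $\nabla_{\xi}\phi = 0$ yields $\phi(\nabla_{\xi}h) + (\nabla_{\xi}h)\phi = 0$. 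Replacing $X$ by $\phi X$ in the formula above, applying $\phi$, and using $\phi^{3}=-\phi$, $h\phi=-\phi h$, $\phi^{2}hX=-hX$ eliminates the $\nabla_{\xi}h$-term and produces the contact metric identity
\begin{equation*}
R(X,\xi)\xi - \phi R(\phi X,\xi)\xi = -2\bigl(\phi^{2} + h^{2}\bigr)X.
\end{equation*}
On the other hand, the $(\kappa,\mu)$-condition \eqref{eq:ku} with $Y=\xi$ gives $lX = -\kappa\phi^{2}X + \mu hX$, and a parallel computation yields $\phi l\phi X = \kappa\phi^{2}X + \mu hX$, so the $\mu$-terms cancel and $lX-\phi l\phi X = -2\kappa\phi^{2}X$. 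Comparing with the identity above forces $h^{2} = -(1-\kappa)\phi^{2}$. Restricted to the contact distribution $\phi^{2} = -\mathrm{Id}$, so $h^{2} = (1-\kappa)\,\mathrm{Id}$; positivity of $h^{2}$ (since $h$ is symmetric) gives $\kappa \le 1$. Equality $\kappa=1$ forces $h^{2}=0$, hence $h=0$, whereupon \eqref{eq:ku} reduces to $R(X,Y)\xi = \eta(Y)X - \eta(X)Y$, i.e.\ the Sasakian condition; conversely, Sasakian implies $h=0$ and $\kappa=1$.

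In the non-Sasakian case $\kappa<1$, put $\lambda = \sqrt{1-\kappa}>0$. Then $h$ is $g$-symmetric with $h^{2}=\lambda^{2}\,\mathrm{Id}$ on $\ker\eta$ and $h\xi=0$, so its spectrum is exactly $\{0,\lambda,-\lambda\}$, with pairwise $g$-orthogonal eigenspaces $\mathscr{D}_{h}(0)=\mathrm{span}\{\xi\}$ and $\mathscr{D}_{h}(\pm\lambda)\subset\ker\eta$. Integrability of the one-dimensional distribution $\mathscr{D}_{h}(0)$ is automatic. The delicate step, and the place where I expect the main obstacle, is integrability of $\mathscr{D}_{h}(\pm\lambda)$: for sections $X,Y$ of $\mathscr{D}_{h}(\pm\lambda)$ one must verify $h[X,Y] = \pm\lambda[X,Y]$. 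The plan here is to rewrite $h[X,Y]$ using the torsion-free expression $[X,Y]=\nabla_{X}Y-\nabla_{Y}X$, introduce $(\nabla_{X}h)Y-(\nabla_{Y}h)X$, and then eliminate the covariant derivatives of $h$ by combining $\nabla_{X}\xi = -\phi X - \phi h X$, the newly established identity $h^{2}=-(1-\kappa)\phi^{2}$, the full $(\kappa,\mu)$-nullity condition, and the second Bianchi identity; this is the computation of \cite{KUM}, and essentially all the technical labour of the theorem lives in this final step.
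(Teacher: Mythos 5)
The paper does not prove this statement; it is quoted verbatim from \cite{KUM}, so there is no in-paper argument to compare yours against. That said, your derivation of $h^{2}=-(1-\kappa)\phi^{2}$ is the standard one from that reference and is correct: the contact-metric identity $lX-\phi l\phi X=-2(\phi^{2}+h^{2})X$ combined with $lX=-\kappa\phi^{2}X+\mu hX$ (whence $\phi l\phi X=\kappa\phi^{2}X+\mu hX$, so the $\mu$-terms cancel) yields the identity, and the deductions $\kappa\le 1$, the characterization of $\kappa=1$ as the Sasakian case, and the orthogonal spectral splitting all follow as you say. Two small points are worth making explicit: that both $\lambda$ and $-\lambda$ actually occur, with eigenspaces of equal dimension $n$, follows from $h\phi=-\phi h$, since $\phi$ then interchanges $\mathscr{D}_{h}(\lambda)$ and $\mathscr{D}_{h}(-\lambda)$; and $\mathscr{D}_{h}(0)=\mathrm{span}\{\xi\}$ uses $\lambda\neq 0$.

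The one genuinely incomplete step is the integrability of $\mathscr{D}_{h}(\pm\lambda)$, which you only sketch and defer to \cite{KUM}. The missing ingredient is the formula
\begin{equation*}
(\nabla_{X}h)Y-(\nabla_{Y}h)X=(1-\kappa)\bigl(2g(X,\phi Y)\xi+\eta(X)\phi Y-\eta(Y)\phi X\bigr)+(1-\mu)\bigl(\eta(X)\phi hY-\eta(Y)\phi hX\bigr),
\end{equation*}
obtained by expanding $R(X,Y)\xi$ through $\nabla\xi=-\phi-\phi h$ and comparing with the nullity condition, exactly along the lines you indicate. For sections $X,Y$ of $\mathscr{D}_{h}(\lambda)$ one has $\eta(X)=\eta(Y)=0$ and $g(X,\phi Y)=0$ (since $\phi Y\in\mathscr{D}_{h}(-\lambda)$ is orthogonal to $X$), so the right-hand side vanishes; since $\lambda$ is constant, $h[X,Y]-\lambda[X,Y]=-\bigl((\nabla_{X}h)Y-(\nabla_{Y}h)X\bigr)=0$, and $\eta([X,Y])$ is a multiple of $d\eta(X,Y)=g(X,\phi Y)=0$, so $[X,Y]\in\mathscr{D}_{h}(\lambda)$; the case of $\mathscr{D}_{h}(-\lambda)$ is identical. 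With that lemma supplied, your proof is complete and coincides with the argument of \cite{KUM}.
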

To conclude this subsection, we state some formulae involving the structure tensors of a contact metric manifold.
\begin{theorem}\label{thm:ghprop}
    Let $(M,\eta, g,\phi)$ be a contact metric manifold. Then,
    \begin{itemize}
        \item The tensor field $h$ anti-commutes with $\phi$, i.e., $h\phi +\phi h =0$. Thus, $\phi v$ is an eigenvector of $h$ for every eigenvector $v$ of $h$,
        \item $\mathcal{L}_{\xi}g(X,Y) = 2g(hX,\phi Y)$, for vector fields $X,Y$ in $M$,
        \item For a non-Sasakian $(\kappa,\mu)$-manifold, $\mathcal{L}_{\xi}h = (2-\mu)\phi h + 2(1-\kappa)\phi$.
    \end{itemize}
\end{theorem}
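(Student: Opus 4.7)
For part (i), the plan is to apply $\mathcal{L}_{\xi}$ to the defining relation $\phi^{2}=-I+\eta\otimes\xi$. Since $\mathcal{L}_{\xi}\eta=i_{\xi}d\eta+d(\eta(\xi))=0$ and $\mathcal{L}_{\xi}\xi=0$, the right-hand side is Lie-constant along $\xi$, so
\begin{equation*}
0=\mathcal{L}_{\xi}(\phi^{2})=(\mathcal{L}_{\xi}\phi)\phi+\phi(\mathcal{L}_{\xi}\phi)=2(h\phi+\phi h).
\end{equation*}
This gives $h\phi=-\phi h$ immediately, and hence $\phi$ sends an eigenvector of $h$ with eigenvalue $\lambda$ to one with eigenvalue $-\lambda$ (and in particular to an eigenvector of $h$).

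For part (ii), I would take Lie derivatives of the compatibility identity $d\eta(X,Y)=g(X,\phi Y)$. Because $d$ commutes with $\mathcal{L}_{\xi}$ and $\mathcal{L}_{\xi}\eta=0$, we get $\mathcal{L}_{\xi}d\eta=0$, so
\begin{equation*}
0=(\mathcal{L}_{\xi}g)(X,\phi Y)+g(X,(\mathcal{L}_{\xi}\phi)Y)=(\mathcal{L}_{\xi}g)(X,\phi Y)+2g(X,hY).
\end{equation*}
Restricting to $Y\in\Gamma(\mathrm{Ker}\,\eta)$, replacing $Y$ by $-\phi Y$ (so that $\phi Y$ becomes $Y$ by $\phi^{2}=-I$ on the contact distribution), and using symmetry of $h$ yields $(\mathcal{L}_{\xi}g)(X,Y)=2g(hX,\phi Y)$. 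The remaining $\xi$-direction is checked separately by a direct expansion of $(\mathcal{L}_{\xi}g)(X,\xi)=(\mathcal{L}_{\xi}\eta)(X)=0$, which matches the right-hand side since $\phi\xi=0$.

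For part (iii), the strategy is to first compute $\nabla_{\xi}h$ and then translate between covariant and Lie derivatives. I will invoke the standard contact-metric identities (derivable from the definitions in Section \ref{subsec:CMM}) $\nabla_{X}\xi=-\phi X-\phi h X$ and $\nabla_{\xi}h=\phi-\phi l-\phi h^{2}$, where $l X:=R(X,\xi)\xi$. Plugging $Y=\xi$ into the $(\kappa,\mu)$-nullity condition gives $l=-\kappa\phi^{2}+\mu h$, while the previously stated identity $h^{2}=-(1-\kappa)\phi^{2}$ together with $\phi^{3}=-\phi$ yields $\phi l=\kappa\phi+\mu\phi h$ and $\phi h^{2}=(1-\kappa)\phi$. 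Substituting these collapses $\nabla_{\xi}h$ to $-\mu\phi h$. Finally, expanding
\begin{equation*}
(\mathcal{L}_{\xi}h)X=(\nabla_{\xi}h)X-\nabla_{hX}\xi+h\nabla_{X}\xi
\end{equation*}
and inserting $\nabla_{X}\xi=-\phi X-\phi h X$ together with $h\phi=-\phi h$ produces the extra term $2\phi h X+2\phi h^{2}X=2\phi h X+2(1-\kappa)\phi X$, giving the claimed formula.

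The bookkeeping of signs in part (iii) is the main obstacle; the rest is algebraic manipulation. Once one commits to a convention for $\nabla_{X}\xi$ (whose sign depends on that of $d\eta(X,Y)=g(X,\phi Y)$) and verifies the auxiliary identity $\nabla_{\xi}h=\phi-\phi l-\phi h^{2}$ (which is purely a contact-metric statement, not yet using $(\kappa,\mu)$), the substitutions go through mechanically and the $\phi$ terms cancel in exactly the right way to isolate the coefficients $(2-\mu)$ and $2(1-\kappa)$.
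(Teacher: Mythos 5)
The paper does not actually prove Theorem \ref{thm:ghprop}: it is recalled in the preliminaries as standard background (from \cite{BRCS} and \cite{KUM}) with no argument given, so there is nothing in-paper to compare against. Your proof is correct. Parts (i) and (ii) follow cleanly from $\mathcal{L}_{\xi}\eta=0$, hence $\mathcal{L}_{\xi}d\eta=0$, together with the Leibniz rule for Lie derivatives of tensor compositions; the sign bookkeeping in (ii) works out, since replacing $Y$ by $-\phi Y$ on $\mathrm{Ker}(\eta)$ gives $(\mathcal{L}_{\xi}g)(X,Y)=2g(X,h\phi Y)=2g(hX,\phi Y)$ once you use the $g$-symmetry of $h$ (itself a standard fact that the paper also quotes without proof). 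For (iii) I verified the substitutions: $l=-\kappa\phi^{2}+\mu h$, hence $\phi l=\kappa\phi+\mu\phi h$ and $\phi h^{2}=(1-\kappa)\phi$, so $\nabla_{\xi}h=-\mu\phi h$; then $-\nabla_{hX}\xi+h\nabla_{X}\xi=2\phi hX+2\phi h^{2}X$ via $h\phi=-\phi h$, which assembles to $(2-\mu)\phi h+2(1-\kappa)\phi$ as claimed. The one caveat is that essentially all of the contact-metric content of part (iii) is carried by the two auxiliary identities $\nabla_{X}\xi=-\phi X-\phi hX$ and $\nabla_{\xi}h=\phi-\phi l-\phi h^{2}$, which you invoke but do not derive; they are standard results in \cite{BRCS} and legitimately citable, but a self-contained proof would need to establish them.
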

\subsection{Bi-Legendrian manifolds}\hfill\label{subsec:bil}
\begin{definition}\label{def:leg}
    Let $(M^{2n+1},\eta)$ be a contact manifold. A \textbf{Legendrian distribution} on $M$ is an $n$-dimensional distribution $L$ of $Ker(\eta)$ such that $d\eta(X,Y)=0$ for all $X,Y \in \Gamma (L)$. Further, if $L$ is integrable, then it defines a \textbf{Legendrian foliation}.
\end{definition}
A contact manifold equipped with a pair of complementary Legendrian distributions is called a bi-Legendrian manifold. If $(M^{2n+1},\eta,g,\phi)$ is a non-Sasakian $(\kappa,\mu)$-manifold, then the subspaces $\mathscr{D}_{h}(\lambda)$ and $\mathscr{D}_{h}(-\lambda)$ define orthogonal Legendrian distributions of $Ker(\eta)$. Thus, any non-Sasakian $(\kappa, \mu)$-manifold is naturally endowed with a pair of complementary integrable Legendrian distributions of dimension $n$ each, making it a bi-Legendrian manifold. Legendrian manifolds have been studied by Pang, Liberman and Jayne (see \cite{LF}, \cite{LFC} and \cite{LFCM}). This was used in \cite{BLKU} to study the interplay between $(\kappa,\mu)$-manifolds and bi-Legendrian manifolds. The authors proved the following result which gives a characterization of $(\kappa,\mu)$-manifolds in terms of the bi-Legendrian structure.
\begin{theorem}\cite{BLKU}
    A contact metric manifold $(M,\eta, g,\phi)$ is a $(\kappa,\mu)$-manifold if and only if it admits an orthogonal bi-Legendrian structure $(L_{1}, L_{2})$ such that the corresponding biligendrian connection $\bar{\nabla}$ satisfies $\Bar{\nabla}\phi=0$ and $\Bar{\nabla}h= 0$. The biligendrian structure $(L_{1},L_{2})$ coincides with the eigenspaces of $h$.
\end{theorem}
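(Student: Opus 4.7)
The plan is to prove both directions by exploiting the canonical bi-Legendrian connection $\bar{\nabla}$ associated to any bi-Legendrian structure $(L_1, L_2)$ on a contact manifold. This connection is the unique linear connection that parallelizes $L_1$, $L_2$ and $\mathbb{R}\xi$, preserves $\eta$ (hence $d\eta$), and whose torsion is prescribed to vanish on pairs of vectors lying in the same distribution while satisfying $T(X,Y)=2\,d\eta(X,Y)\xi$ for $X\in L_1$, $Y\in L_2$. The relationship between $\bar{\nabla}$ and the Levi-Civita connection $\nabla$ is encoded in a difference tensor, which translates the tensorial conditions $\bar{\nabla}\phi=0$ and $\bar{\nabla}h=0$ into identities involving $\nabla\phi$, $\nabla h$, and ultimately the curvature $R(\cdot,\cdot)\xi$.

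For the forward implication I would start from a non-Sasakian $(\kappa,\mu)$-manifold (the Sasakian case is vacuous since $h=0$). The decomposition theorem from \cite{KUM} supplies integrable, mutually orthogonal distributions $\mathscr{D}_{h}(\lambda)$ and $\mathscr{D}_{h}(-\lambda)$ inside $\operatorname{Ker}(\eta)$. To verify these are Legendrian, use $\phi h + h\phi = 0$ from Theorem \ref{thm:ghprop}: for $X,Y\in \mathscr{D}_{h}(\lambda)$, $\phi Y\in \mathscr{D}_{h}(-\lambda)$, whence $d\eta(X,Y)=g(X,\phi Y)=0$ by orthogonality of the eigenspaces. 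Setting $L_1=\mathscr{D}_{h}(\lambda)$, $L_2=\mathscr{D}_{h}(-\lambda)$, I would then build the associated bi-Legendrian connection. The identity $\bar{\nabla}h=0$ follows because $\bar{\nabla}$ preserves each eigenspace and the eigenvalues $\pm\lambda$ are globally constant (as $\kappa$ is constant). For $\bar{\nabla}\phi=0$, I would express $\bar{\nabla}\phi$ in terms of $\nabla\phi$ and the difference tensor; the standard contact-metric formula for $\nabla_X\phi$, together with the identity $\mathcal{L}_\xi h=(2-\mu)\phi h+2(1-\kappa)\phi$ from Theorem \ref{thm:ghprop}, should produce exact cancellation on each type of argument (vectors in $L_1$, in $L_2$, or along $\xi$).

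For the converse, suppose an orthogonal bi-Legendrian structure $(L_1, L_2)$ is given with $\bar{\nabla}\phi=0$ and $\bar{\nabla}h=0$. Since $\bar{\nabla}$ preserves the distributions, $\bar{\nabla}h=0$ forces $h|_{L_i}$ to be $\bar{\nabla}$-parallel; combined with $\bar{\nabla}\phi=0$ and the algebraic identity $h\phi+\phi h=0$ valid on any contact metric manifold, one concludes that $h|_{L_i}$ is a constant multiple of the identity, with opposite signs on $L_1$ and $L_2$. This identifies $L_1, L_2$ with the eigenspaces of $h$ and produces a constant $\kappa=1-\lambda^2$. To derive the nullity condition I would write $\nabla=\bar{\nabla}+D$ for the difference tensor $D$ uniquely determined by $\bar{\nabla}\eta=0$ and the prescribed torsion, compute $R(X,Y)\xi$ from this decomposition, and use the parallelism of $\phi$ and $h$ to collapse all contributions except those proportional to $\eta(Y)X-\eta(X)Y$ and $\eta(Y)hX-\eta(X)hY$. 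The constant $\mu$ is then read off from the resulting coefficient.

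The hard part will be bookkeeping the difference tensor $D=\nabla-\bar{\nabla}$ explicitly and tracking how the torsion conventions of $\bar{\nabla}$ distribute across vectors in different distributions. Extracting the precise constant $\mu$ in the reverse direction, rather than merely a nullity identity of the qualitative form, requires careful use of the compatibility between the contact metric data $(\eta,g,\phi)$ and the bi-Legendrian structure; in particular, the key input is that the Reeb direction behaves uniformly with respect to both distributions, which reduces the analysis to a computation on a local $\phi$-adapted orthonormal frame of $h$-eigenvectors.
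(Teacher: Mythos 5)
First, a point of reference: the paper does not prove this statement at all --- it is quoted from \cite{BLKU} as background, so there is no internal proof to compare your attempt against. Judged on its own terms, your proposal is a sensible roadmap: the choice $L_{1}=\mathscr{D}_{h}(\lambda)$, $L_{2}=\mathscr{D}_{h}(-\lambda)$, the verification that these are Legendrian via $h\phi+\phi h=0$ and orthogonality of the eigenspaces, and the observation that $\bar{\nabla}h=0$ follows from $\bar{\nabla}$ preserving the eigendistributions together with the constancy of $\lambda=\sqrt{1-\kappa}$ are all correct. However, the two steps that carry the actual content of the theorem are asserted rather than proved.

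In the forward direction, the claim that $\bar{\nabla}\phi=0$ ``should produce exact cancellation'' is precisely the nontrivial computation: it requires the explicit expression of the bi-Legendrian connection in terms of the Levi-Civita connection (equivalently, identifying $\bar{\nabla}$ with a Tanaka--Webster-type connection on a $(\kappa,\mu)$-space), and without writing down the difference tensor $D=\nabla-\bar{\nabla}$ the cancellation is unverified. More seriously, in the converse your deduction that $\bar{\nabla}h=0$, $\bar{\nabla}\phi=0$ and $h\phi+\phi h=0$ force $h|_{L_{i}}$ to be a \emph{single} constant scalar does not follow from those hypotheses. Parallelism of a symmetric tensor yields only locally constant eigenvalues and parallel eigendistributions; nothing you invoke excludes $h|_{L_{1}}$ having two distinct constant eigenvalues $\lambda_{1}\neq\lambda_{2}$, with $h|_{L_{2}}$ then carrying $-\lambda_{1},-\lambda_{2}$ via $\phi$ (which interchanges $L_{1}$ and $L_{2}$ in the orthogonal case). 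The identification of $(L_{1},L_{2})$ with the eigenspaces of $h$ and the extraction of constants $\kappa$ and $\mu$ can only emerge from the curvature computation of $R(X,Y)\xi$ via $\nabla=\bar{\nabla}+D$, which you postpone to the end; as written you are importing the eigenspace structure (which holds for $(\kappa,\mu)$-spaces by the theorem of \cite{KUM}) before the nullity condition has been established. To close the gap you would need to carry out that computation in full, which is exactly the content of the proof in \cite{BLKU}.
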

Thus, it is natural to look at the bi-Legendrian structure on the given $(\kappa,\mu)-$manifold and investigate error tensors for which the decomposition is preserved. More generally, throughout the article we will impose conditions which preserve the bi-Legendrian structure of the contact manifold.

\subsection{Semi-Riemannian metrics and Symplectic Geometry}\hfill\label{subsec:semir}

Semi-Riemannian metrics are generalizations of Riemannian metrics obtained by relaxing the condition of positive-definiteness. Instead, we only require that the product is non-degenerate. Compatibility between a semi-Riemannian metric and a symplectic manifold is given as follows.
\begin{definition}\label{def:parak} 
    An \textbf{almost para-Kähler} manifold is a $4$-tuple $(B,\omega , g, F)$ comprising a symplectic manifold $(B,\omega)$, a semi-Riemannian metric $g$ and a $(1,1)$-tensor field $F$ such that $F^{2} = I$ and $\omega(X,Y) = g(X,FY)$. If $\nabla F = 0$, where $\nabla$ is the Levi-Civita connection of $g$, then $(B, \omega,g, F)$ is said to be a \textbf{para-Kähler} manifold.
\end{definition}
\noindent For a detailed account of para-Kähler geometry, we refer the reader to \cite{SPG}.

Since we are interested in geometric properties of submersions, we recall the definition of a semi-Riemannian submersion here, which was introduced by O'Neill in \cite{SRG}.
\begin{definition}
    A submersion between semi-Riemannian manifolds $(M,\tilde{g})$ and $(B,g)$ is said to be a \textbf{semi-Riemannian submersion} if 
    \begin{itemize}
        \item The fibers $\{\pi^{-1}(b)\}_{b\in B}$ are semi-Riemannian submanifolds of $M$, and,
        \item The derivative map $d\pi$ preserves the scalar product of vectors normal to the fibers.
    \end{itemize}
\end{definition}

\numberwithin{theorem}{section}
\section{Riemannian metrics on the base space}\label{sec:riem}
In this section, we investigate the properties of a compatible metric on the base space of the Boothby-Wang fibration whose total space is a closed regular non-Sasakian $(\kappa , \mu)$-contact manifold. The projection map of the Boothby-Wang fibration $\pi$ cannot be a Riemannian submersion. This is because, a Riemannian submersion would force $\mathcal{L}_{\xi}g = 0 $, which in turn forces the $(\kappa, \mu)$-contact manifold to be $K$-contact and hence, Sasakian. Next, we show that the projection map $\pi$ being a conformal submersion (a submersion for which the differential map preserves angles when restricted to the contact distribution) also forces the $(\kappa,\mu)$-structure to be Sasakian. To this end, we prove Theorem \ref{thm:confrig}.
\begin{proof}[Proof of Theorem \ref{thm:confrig}]
    Let $p\in M$. We can construct an orthonormal (with respect to $\Tilde{g}_{p})$ basis of $Ker(\eta_{p})$, of the form $\{e_{1}, e_{2}, \dots , e_{n}, \phi e_{1}, \phi e_{2},\dots , \phi e_{n}\}$ as follows. Choose any unit vector $e_{1}$. Since $\Tilde{g}(v,w) = \Tilde{g}(\phi v, \phi w)$, $\phi e_{1}$ is a unit vector as well. Since $d\eta(v,w) = \Tilde{g}(v, \phi w)$, $\phi e_{1}$ is orthogonal to $e_{1}$. Considering the subspace orthogonal to the span of $\{e_{1}, \phi e_{1}\}$ in $Ker(\eta_{p})$, we can continue the procedure described above and obtain the necessary basis in $n$ steps.
    Suppose $\pi$ is a conformal submersion. There exists a function $f$ such that
    \begin{equation*}
        \Tilde{g}(V,W) = e^{2f}g(d\pi V, d\pi W), 
    \end{equation*}
    for all vector fields $V,W \in \Gamma(Ker(\eta))$.
    For the Boothby-Wang fibration, we have $\pi^{*}(\omega) = d\eta$. For vectors $v,w \in Ker(\eta_{p})$,
    \begin{flalign*}
        \Tilde{g}(v,\phi w) = d\eta(v,w) &= \pi^{*}\omega(v,w)\\
        &= \omega (d\pi v, d\pi w)\\
        &= g(d\pi v, J\left(d\pi w\right))\\
        &= \frac{1}{e^{2f}}\Tilde{g}(v, \widetilde{Jd\pi w}).
    \end{flalign*}
    Substituting $w=e_{1}$ and varying $v$ in the basis constructed above, we get
    \begin{equation}
        \begin{gathered}
            \Tilde{g}(\phi e_{1}, \widetilde{Jd\pi e_{1}}) = e^{2f},\\
            \Tilde{g}(v, \widetilde{Jd\pi e_1}) = 0 \text{ for all }v\in \{e_1, e_2, \dots, e_n, \phi e_2, \dots, \phi e_n\}.
        \end{gathered}
    \end{equation}
    In other words,
    \begin{equation*}
        \widetilde{Jd\pi e_{1}} = e^{2f}\phi e_1.
    \end{equation*}
    Since $\pi$ is a conformal submersion, the image set of a basis will again be a basis of the corresponding tangent space. Let $\{f_1, f_2,\dots, f_{2n}\}$ be the basis of $T_{\pi p}B$ which is the image of the basis constructed above, with elements in the order specified. With this notation, we have,
    \begin{equation*}
        J f_{1} = e^{2f} f_{n+1}.
    \end{equation*}
    Similarly, we can prove that,
    \begin{equation*}
        J f_{n+1} = -e^{2f} f_{1}.
    \end{equation*}
    Since $J$ is an almost complex structure, we have $J^{2} f_{1} = -f_{1}$. Using the identities above, we get $e^{4f} = 1$. In other words, $f=0$. Thus, for all $V,W \in \Gamma(Ker(\eta))$, we have $\tilde{g}(V,W) = g(d\pi V,d\pi W)$, which proves both the statements in the theorem.
\end{proof}
The construction of the basis of the form $\{e_{1}, e_{2}, \dots, e_{n}, \phi e_{1}, \phi e_{2}, \dots ,\phi e_{n}\}$ can be done locally, in a coordinate neighbourhood. Such a basis is called a local $\phi$-basis. For the remainder of the article, $\{f_{1},f_{2},\dots , f_{2n}\}$ will denote the (ordered) image of a local $\phi$-basis, under the derivative map $d\pi$.

As a special case of Theorem \ref{thm:confrig} when restricted to $(\kappa,\mu)$-manifolds, we obtain the following result.
\begin{corollary}\label{thm:confrigku}
    Suppose $\pi : M \rightarrow B$ is the projection map of a regular compact $(\kappa,\mu)$-manifold $(M,\eta,\Tilde{g}, \phi)$ onto a symplectic manifold $(B, \omega,g,J)$. If $\pi$ is a conformal submersion, then $M$ is Sasakian and $B$ is Kähler.
\end{corollary}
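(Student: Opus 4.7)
The plan is to obtain this corollary as a short chain of applications of Theorem \ref{thm:confrig} together with the structural results on $(\kappa,\mu)$-manifolds recalled in Section \ref{subsec:ku}. The whole proof should fit in three essentially formal steps; nothing new really needs to be proved.

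First, I would apply Theorem \ref{thm:confrig} verbatim to the conformal submersion $\pi : M \to B$. That theorem yields two outputs at once: $\pi$ is actually a Riemannian submersion, and $(M,\eta,\tilde{g},\phi)$ is a $K$-contact manifold. In particular, the tensor field $h = \tfrac{1}{2}\mathcal{L}_{\xi}\phi$ vanishes identically on $M$.

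Second, I would upgrade $K$-contact to Sasakian using the $(\kappa,\mu)$-hypothesis. By the theorem of Blair--Koufogiorgos--Papantoniou recalled in Section \ref{subsec:ku}, any $(\kappa,\mu)$-manifold satisfies $h^{2} = -(1-\kappa)\phi^{2}$, and $\kappa = 1$ if and only if the manifold is Sasakian. Since $h \equiv 0$ forces $(1-\kappa)\phi^{2} = 0$, and $\phi^{2} = -I + \eta \otimes \xi$ is non-zero on the contact distribution $\ker(\eta)$, we conclude $\kappa = 1$, so $(M,\eta,\tilde{g},\phi)$ is Sasakian.

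Third, to transfer the conclusion to the base, I would invoke the Sasakian version of the Boothby--Wang correspondence (the theorem attributed to \cite{SKC} in the preliminaries): a compact regular Sasakian manifold is a Boothby--Wang bundle over a Kähler manifold. Applied to $M$, this identifies $B$ as a Kähler manifold; the compatibility with the given $(\omega,g,J)$ is automatic because $\pi$ is now a Riemannian submersion and $J$ is determined by $\phi$ via $\phi X = \widetilde{J\, d\pi X}$, so integrability of $J$ on $B$ is inherited from the Sasakian structure on $M$. I do not anticipate a real obstacle here — the only point that requires a moment of care is confirming that the Kähler data produced by \cite{SKC} agrees with the data already fixed on $B$, and this follows from the uniqueness inherent in the Boothby--Wang construction of $(\tilde{g},\phi)$ from $(g,J)$ described at the start of Section \ref{subsec:CMM}.
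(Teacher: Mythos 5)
Your proposal is correct and follows essentially the same route as the paper: apply Theorem \ref{thm:confrig} to get a Riemannian submersion and a $K$-contact structure, use the fact that a $K$-contact $(\kappa,\mu)$-manifold is Sasakian, and then invoke the Boothby--Wang correspondence of \cite{SKC} to conclude that $B$ is K\"ahler. Your second step merely spells out the justification (via $h\equiv 0$ and $h^{2}=-(1-\kappa)\phi^{2}$ forcing $\kappa=1$) that the paper states as a known fact, which is a harmless elaboration.
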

\begin{proof}
    From the previous lemma, we know that $\pi$ is a Riemannian submersion and $M$ is a $K$-contact manifold. Since every $(\kappa,\mu)$-manifold which is also $K$-contact is Sasakian, we have the first part of the result. By Hatakeyama's result (\cite{SKC}), $B$ is Kähler.
\end{proof}
Since the above result asserts that the projection map cannot be a conformal submersion in case of a non-Sasakian compact regular $(\kappa, \mu)$-manifold, we investigate the metrics with non-trivial deviation from being a conformal metric. To this end, we consider the bilinear form, corresponding to any associated metric $g$ on $(B, \omega)$ and $f\in C^{\infty}(M)$, given by
\begin{equation*}
    A(X,Y) = \tilde{g}(X,Y)-e^{2f}g(d\pi X, d\pi Y),
\end{equation*}
for all $X,Y \in \Gamma(Ker(\eta))$. There exists a unique (1,1)-tensor field $T$ for which the above equation can be reformulated as
\begin{equation}
    \Tilde{g}(TX, Y) = \Tilde{g}(X,Y)-e^{2f}g(d\pi X, d\pi Y),
\end{equation}
for all $X,Y \in \Gamma(Ker(\eta))$.

Henceforth, we will call $T$ the \textbf{error tensor}. We will try to solve for triples $(T,g,f)$ which satisfy \eqref{eq:main}. We begin by exploring the properties of the error tensor.
\begin{lemma}\label{lem:Tprop}
    The (1,1)-tensor field $T$ defined above has the following properties:
    \begin{enumerate}[i)]
        \item $T$ is symmetric with respect to $\Tilde{g}$,
        \item Every eigenvalue of $T$ is strictly less than 1, \label{lem:Tprop2}
        \item $\mathcal{L}_{\xi}T = 2\phi hT - 2\phi h-2(\xi f)(I-T)$.
    \end{enumerate}
\end{lemma}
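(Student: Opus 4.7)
The plan is to extract the three properties directly from the defining identity \eqref{eq:main} and the structural identities of $(\kappa,\mu)$-manifolds collected in Section \ref{subsec:CMM}. The right-hand side of \eqref{eq:main} is visibly symmetric in $X$ and $Y$, so $\tilde g(TX,Y)=\tilde g(TY,X)=\tilde g(X,TY)$, which gives (i) on $Ker(\eta)$ and extends trivially once $T$ is taken to vanish on $\xi$. For (ii), given an eigenvector $v\in Ker(\eta_p)$ with $Tv=\alpha v$, setting $X=Y=v$ in \eqref{eq:main} yields $(1-\alpha)\,\tilde g(v,v)=e^{2f(p)}g(d\pi_p v,d\pi_p v)$; since $d\pi_p$ restricts to a linear isomorphism on $Ker(\eta_p)$ and $g$ is Riemannian, the right side is strictly positive, and dividing by $\tilde g(v,v)>0$ forces $\alpha<1$.

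The main computation is (iii). I would evaluate \eqref{eq:main} on horizontally lifted vector fields $\tilde X,\tilde Y$, which are $\xi$-invariant by construction, and apply $\mathcal{L}_\xi$ to both sides. Because $[\xi,\tilde X]=[\xi,\tilde Y]=0$, the Lie derivative of the left-hand side reduces to $(\mathcal{L}_\xi\tilde g)(T\tilde X,\tilde Y)+\tilde g((\mathcal{L}_\xi T)\tilde X,\tilde Y)$. On the right, $g(d\pi\tilde X,d\pi\tilde Y)$ is a pullback from $B$ of a function and hence is constant along $\xi$-orbits, so only the conformal factor contributes: the Lie derivative of $e^{2f}g(d\pi\tilde X,d\pi\tilde Y)$ equals $2(\xi f)e^{2f}g(d\pi\tilde X,d\pi\tilde Y)$, which I rewrite as $2(\xi f)\tilde g((I-T)\tilde X,\tilde Y)$ by using \eqref{eq:main} in reverse.

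To finish, I apply $\mathcal{L}_\xi\tilde g(X,Y)=2\tilde g(hX,\phi Y)$ from Theorem \ref{thm:ghprop} to both occurrences of $\mathcal{L}_\xi\tilde g$, and then convert $\tilde g(hT\tilde X,\phi\tilde Y)$ and $\tilde g(h\tilde X,\phi\tilde Y)$ into $-\tilde g(\phi hT\tilde X,\tilde Y)$ and $-\tilde g(\phi h\tilde X,\tilde Y)$ respectively, using the $\tilde g$-skewness $\tilde g(\phi V,W)=-\tilde g(V,\phi W)$ (immediate from antisymmetry of $d\eta(V,W)=\tilde g(V,\phi W)$). Collecting terms and invoking non-degeneracy of $\tilde g$ on $Ker(\eta)$ yields the asserted identity for $\mathcal{L}_\xi T$. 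I expect the main obstacle to be purely bookkeeping: tracking signs consistently while balancing the $\tilde g$-symmetry of $h$ and $T$ against the $\tilde g$-skewness of $\phi$, together with confirming that the identity derived on horizontal lifts is genuinely a global tensor equation because such lifts span $Ker(\eta)$ pointwise.
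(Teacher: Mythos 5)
Your proposal is correct and follows essentially the same route as the paper: symmetry read off from the defining identity, the eigenvalue bound from evaluating \eqref{eq:main} on an eigenvector together with injectivity of $d\pi$ on $Ker(\eta)$, and the Lie-derivative formula by differentiating \eqref{eq:main} along $\xi$ on horizontal lifts using $\mathcal{L}_{\xi}\tilde g(X,Y)=2\tilde g(hX,\phi Y)$ and the skewness of $\phi$. Your explicit remark that $d\pi_p$ is an isomorphism on $Ker(\eta_p)$ (needed for strict positivity in (ii)) is a detail the paper leaves implicit.
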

\begin{proof}
    \begin{enumerate}[$i)$]
        \item Symmetry of $T$ follows from the symmetry of $g$ and $\Tilde{g}$. 
        \item Since $T$ is a symmetric tensor field, all of its eigenvalues are real. Suppose $v$ is a unit eigenvector of $T$ corresponding to an eigenvalue $\nu$. From equation \eqref{eq:main},
        \begin{flalign*}
            \nu = \Tilde{g}(Tv,v) &= \Tilde{g}(v,v) - e^{2f}g(d\pi(v), d\pi(v))\\
            &= 1 - e^{2f}\Vert d\pi(v) \Vert^{2}
        \end{flalign*}
        Thus,
        \begin{equation*}
            1-\nu = e^{2f}\Vert d\pi(v) \Vert^{2} > 0.
        \end{equation*}
        \item Taking the Lie derivative of equation \eqref{eq:main} along $\xi$, for horizontally lifted vector fields $\tilde{X},\tilde{Y}$, we get, using the properties mentioned in theorem \ref{thm:ghprop},
        \begin{flalign*}
            0 &= \mathcal{L}_{\xi}\tilde{g}(\tilde{X},\tilde{Y}) - 2(\xi f)e^{2f}g(X,Y) -\mathcal{L}_{\xi}g(T\tilde{X},\tilde{Y})\\
            &= 2\tilde{g}(h\tilde{X}, \phi \tilde{Y}) - 2(\xi f)\left( \tilde{g}(\tilde{X}-T\tilde{X},\tilde{Y}) \right)-2\tilde{g}(hT\tilde{X},\phi \tilde{Y}) - \tilde{g}((\mathcal{L}_{\xi}T)\tilde{X}, \tilde{Y})\\
            &= \tilde{g}\left((\phi hT-\phi h-(\xi f)(I-T))\tilde{X},\tilde{Y} \right)-\frac{1}{2}\tilde{g}((\mathcal{L}_{\xi}T)\tilde{X}, \tilde{Y}).
        \end{flalign*}
    Since the above identity holds pointwise for all vector fields $\tilde{X}$ and $\tilde{Y}$, we get
    \begin{equation*}
        \mathcal{L}_{\xi}T = 2\phi hT - 2\phi h-2(\xi f)(I-T).
    \end{equation*}
    \end{enumerate}
\end{proof}
Since the bi-Legendrian structure defined by the eigenspaces of $h$ play an important role in the study of $(\kappa,\mu)$-manifolds, we consider error tensors which preserve the bi-Legendrian structure. More precisely, we analyse the solutions to \eqref{eq:main} in which the error tensor maps the Legendrian distributions $\mathscr{D}_{h}(\lambda)$ and $\mathscr{D}_{h}(-\lambda)$ to themselves. We begin by stating the following fact which can be verified easily.
\begin{lemma}\label{lem:commeq1}
    Let $T$ be the error tensor as in \eqref{eq:main}. Then,
    the tensor field $h$ commutes with $T$ if and only if $T(\mathscr{D}_{h}(\lambda)) \subseteq \mathscr{D}_{h}(\lambda)$ and $T(\mathscr{D}_{h}(-\lambda)) \subseteq \mathscr{D}_{h}(-\lambda)$.
\end{lemma}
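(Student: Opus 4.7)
The plan is to prove this pointwise at each $p \in M$, treating $h_p$ and $T_p$ as linear operators on $T_p M$. First I would recall, from the cited theorem of \cite{KUM}, that $h^{2} = -(1-\kappa)\phi^{2}$, together with the identity $\phi^{2}X = -X$ for $X \in \text{Ker}(\eta)$. Combining these yields $h^{2}X = (1-\kappa)X = \lambda^{2}X$ on $\text{Ker}(\eta)$, so the $\tilde{g}$-symmetric operator $h|_{\text{Ker}(\eta)}$ has spectrum $\{\pm \lambda\}$ and one obtains the pointwise orthogonal decomposition $\text{Ker}(\eta_p) = \mathscr{D}_{h}(\lambda)_p \oplus \mathscr{D}_{h}(-\lambda)_p$. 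This identification is the only structural input specific to non-Sasakian $(\kappa,\mu)$-manifolds that the proof will use.

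For the forward direction, assume $hT = Th$ and pick $v \in \mathscr{D}_{h}(\lambda)_p$. The computation $h(Tv) = T(hv) = \lambda (Tv)$ shows $Tv \in \mathscr{D}_{h}(\lambda)_p$; the analogous computation handles $\mathscr{D}_{h}(-\lambda)_p$. For the reverse direction, suppose the distributions are preserved. Any $v \in \text{Ker}(\eta_p)$ admits the unique decomposition $v = v_{+} + v_{-}$ with $v_{\pm} \in \mathscr{D}_{h}(\pm \lambda)_p$, and a direct computation gives
\[
hTv \;=\; \lambda\, Tv_{+} - \lambda\, Tv_{-} \;=\; T(\lambda v_{+} - \lambda v_{-}) \;=\; Thv,
\]
where the first equality uses the hypothesis $Tv_{\pm} \in \mathscr{D}_{h}(\pm \lambda)_p$, and the second uses linearity of $T$. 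On the remaining direction $\xi$, the identity $h\xi = 0$ forces $Th\xi = 0$ automatically; and since $\mathscr{D}_{h}(0)_p = \text{span}(\xi_p)$ (a consequence of $\lambda \neq 0$ in the non-Sasakian case), $T\xi$ is proportional to $\xi$ and is annihilated by $h$, so $hT\xi = 0 = Th\xi$ holds tautologically.

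Since the argument is essentially the spectral theorem applied to two commuting symmetric operators, no serious obstacle arises. The only point warranting care is the clean separation of the contact distribution from the Reeb direction, and this is handled by the observation $\mathscr{D}_{h}(0)_p = \text{span}(\xi_p)$ noted above. Accordingly, I would present the proof as a short two-paragraph argument: one paragraph establishing the spectral decomposition of $h|_{\text{Ker}(\eta)}$, and one paragraph giving the two implications via the explicit eigen-decomposition of an arbitrary $v$.
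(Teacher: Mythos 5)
Your argument is correct and is exactly the ``easy verification'' that the paper omits (it states Lemma \ref{lem:commeq1} without proof): commuting with the $\tilde g$-symmetric operator $h$, whose restriction to $Ker(\eta)$ has the two eigenspaces $\mathscr{D}_{h}(\pm\lambda)$, is equivalent to preserving those eigenspaces. The only blemish is your final paragraph on the Reeb direction: the error tensor $T$ is defined by \eqref{eq:main} only on $\Gamma(Ker(\eta))$, so there is nothing to check along $\xi$, and in any case your assertion that $T\xi$ is proportional to $\xi$ does not follow from the stated hypotheses --- simply delete that paragraph and restrict the whole discussion to $Ker(\eta)$.
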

We have already seen that the metric $g$ on the base space cannot be in the conformal class of $\tilde{g}$. However, the condition $hT=Th$ ensures that the metric $g$ respects the orthogonality of vectors coming from $\mathscr{D}_{h}(\lambda)$ and $\mathscr{D}_{h}(-\lambda)$ in the following sense.
\begin{lemma}\label{lem:commeq2}
    Let $(T,f,g)$ be a solution to equation \eqref{eq:main}. Then the following are equivalent:
    \begin{enumerate}
        \item $hT=Th$.
        \item $g(d\pi X,d\pi Y) = 0$ whenever $X\in \mathscr{D}_{h}(\lambda), Y\in\mathscr{D}_{h}(-\lambda)$ . 
    \end{enumerate}
\end{lemma}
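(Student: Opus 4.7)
The plan is to exploit the orthogonal decomposition $\mathrm{Ker}(\eta) = \mathscr{D}_h(\lambda) \oplus \mathscr{D}_h(-\lambda)$, which is available because $h$ is $\tilde{g}$-symmetric with distinct eigenvalues $\pm\lambda$ on the contact distribution. The equivalence will then fall out of the error equation \eqref{eq:main} by playing the two non-degeneracy factors against each other: the non-vanishing of $e^{2f}$ on one side, and the non-degeneracy of $\tilde g$ (restricted to each Legendrian summand) on the other.

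For the forward direction, I would assume $hT = Th$ and invoke Lemma \ref{lem:commeq1} to conclude that $T$ preserves the Legendrian summands. Then for $X \in \mathscr{D}_h(\lambda)$ and $Y \in \mathscr{D}_h(-\lambda)$, the vector $TX$ still lies in $\mathscr{D}_h(\lambda)$, so by the $\tilde{g}$-orthogonality of eigenspaces of $h$ one gets $\tilde g(TX,Y)=0$ and similarly $\tilde g(X,Y)=0$. Substituting into \eqref{eq:main} yields $e^{2f}g(d\pi X, d\pi Y) = 0$, and dividing by $e^{2f}>0$ gives $g(d\pi X, d\pi Y) = 0$.

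For the backward direction, I would fix $X \in \mathscr{D}_h(\lambda)$ and take an arbitrary $Y \in \mathscr{D}_h(-\lambda)$. Since $\tilde g(X,Y)=0$ by orthogonality of eigenspaces and $g(d\pi X, d\pi Y)=0$ by assumption, \eqref{eq:main} immediately gives $\tilde{g}(TX,Y) = 0$. Thus $TX$ is $\tilde{g}$-orthogonal to the entire Legendrian summand $\mathscr{D}_h(-\lambda)$. Because $T$ maps the contact distribution into itself (as it is defined on $\Gamma(\mathrm{Ker}(\eta))$) and $\mathrm{Ker}(\eta) = \mathscr{D}_h(\lambda)\oplus \mathscr{D}_h(-\lambda)$ is a $\tilde{g}$-orthogonal decomposition, this forces $TX \in \mathscr{D}_h(\lambda)$. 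The symmetric argument for $X \in \mathscr{D}_h(-\lambda)$ gives $T(\mathscr{D}_h(-\lambda)) \subseteq \mathscr{D}_h(-\lambda)$. Applying Lemma \ref{lem:commeq1} in the reverse direction then yields $hT=Th$.

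There isn't really a hard step here; the proof is essentially a bookkeeping argument combining the error equation with the orthogonality of the $\pm\lambda$-eigenspaces of $h$. The only point that requires a sentence of justification is that $T$ sends $\mathrm{Ker}(\eta)$ into itself (needed to invoke the orthogonal decomposition in the backward direction), which is immediate since $T$ was introduced in \eqref{eq:main} as a tensor acting on the contact distribution.
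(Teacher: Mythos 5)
Your proof is correct and follows essentially the same route as the paper's: both directions reduce to the rearranged error equation together with the $\tilde{g}$-orthogonality of the $\pm\lambda$-eigenspaces of $h$, with Lemma \ref{lem:commeq1} translating eigenspace-preservation into the commutation $hT=Th$. Your explicit remark that $T$ maps $Ker(\eta)$ into itself (so that orthogonality to $\mathscr{D}_h(-\lambda)$ within the contact distribution forces $TX\in\mathscr{D}_h(\lambda)$) is a point the paper leaves implicit, but there is no substantive difference.
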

\begin{proof}
    Equation \eqref{eq:main} can be rearranged to obtain
    \begin{equation}\label{eq:main2}
        g(d\pi X, d\pi Y) = \frac{\tilde{g}(X-TX,Y)}{e^{2f}},
    \end{equation}
    for all $X,Y \in \Gamma(Ker(\eta)).$
    Suppose $hT=Th$. Let $X\in\mathscr{D}_{h}(\lambda)$ and $Y\in \mathscr{D}_{h}(-\lambda)$. From Lemma \ref{lem:commeq1}, $T$ preserves the eigenspaces of $h$. Thus, $TX\in \mathscr{D}_{h}(\lambda)$ and consequently, $X-TX$ is $\tilde{g}$-orthogonal to $Y$. Using equation \eqref{eq:main2}, we obtain $g(d\pi X,d\pi Y) = 0$.

    For the converse, suppose statement 2. in the statement of Lemma \ref{lem:commeq2} holds. For $X\in\mathscr{D}_{h}(\lambda)$ and $Y\in \mathscr{D}_{h}(-\lambda)$, we have, by equation \eqref{eq:main2}, $\tilde{g}(TX,Y)=0$. In other words, $TX\in \left( \mathscr{D}_{h}(-\lambda) \right)^{\perp} = \mathscr{D}_{h}(\lambda)$. A similar argument can be used to show that $TX\in\mathscr{D}_{h}(-\lambda)$ $X\in\mathscr{D}_{h}(-\lambda)$ and thus $T$ preserves the eigenspaces of $h$. By Lemma \ref{lem:commeq1}, this is equivalent to $hT=Th$.
\end{proof}
Thus, we consider solutions to equation \eqref{eq:main} for which $hT=Th$. Since $h$ and $T$ are commuting diagonalizable operators, they are simultaneously diagonalizable and share a common eigenbasis. The following result shows that there is a common eigenbasis which is a (local) $\phi$-basis.
\begin{lemma}\label{lem:comphibasis}
    Let $T$ be the error tensor for the Boothby-Wang fibration of a closed regular non-Sasakian $(\kappa,\mu)$-manifold. If $hT=Th$, then the Boeckx Index $I_{M}$ must satisfy $\vert I_{M} \vert > 1$. For $\vert I_{M} \vert > 1$, the tensors $h$ and $T$ have a common orthonormal eigenbasis at every point, of the form $\{e_1, e_2, \dots , e_n, \phi e_1, \phi e_2, \dots , \phi e_n\}$, where $e_i \in \mathscr{D}(\lambda)$ for all $i \in \{1,2,\dots , n\}$. 
\end{lemma}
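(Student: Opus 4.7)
The plan is to feed the hypothesis $hT=Th$ into the Lie-derivative formula of Lemma \ref{lem:Tprop} to pin down the $T$-eigendata on $\mathscr{D}_h(-\lambda)$ once those on $\mathscr{D}_h(\lambda)$ are chosen. By Lemma \ref{lem:commeq1}, $T$ preserves $\mathscr{D}_h(\pm\lambda)$; being $\tilde{g}$-symmetric (Lemma \ref{lem:Tprop}), it is diagonalizable on each, so I fix a $\tilde{g}$-orthonormal eigenbasis $\{e_1,\dots,e_n\}$ of $T|_{\mathscr{D}_h(\lambda)}$ with eigenvalues $\nu_i<1$. Since $\phi$ is a $\tilde{g}$-isometry of $Ker(\eta)$ which swaps $\mathscr{D}_h(\lambda)\leftrightarrow\mathscr{D}_h(-\lambda)$ (from $h\phi+\phi h=0$), the set $\{e_i,\phi e_i\}$ is automatically $\tilde{g}$-orthonormal, so the whole task reduces to showing that each $\phi e_i$ is again a $T$-eigenvector.

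Next, I would apply $\mathcal{L}_\xi$ to the tensor identity $hT-Th=0$ to obtain $[h,\mathcal{L}_\xi T]=-[\mathcal{L}_\xi h,T]$. I would compute the left-hand side by substituting $\mathcal{L}_\xi T=2\phi h(T-I)+2(\xi f)(T-I)$ and then using $h\phi+\phi h=0$ together with $h^2=(1-\kappa)I=\lambda^2 I$ on $Ker(\eta)$ to collapse it to $-4\lambda^2\phi(T-I)$. For the right-hand side I would use $\mathcal{L}_\xi h=(2-\mu)\phi h+2(1-\kappa)\phi$ from Theorem \ref{thm:ghprop} and the two simplifications $[\phi h,T]=[\phi,T]\,h$ (from $hT=Th$) and $[\phi h,T]=-h\,[\phi,T]$ (from $h\phi=-\phi h$) to arrive at $[\mathcal{L}_\xi h,T]=[\phi,T]\bigl((2-\mu)h+2\lambda^2 I\bigr)$. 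Writing $A=2-\mu+2\lambda$ and $B=2-\mu-2\lambda$ and restricting to $\mathscr{D}_h(\lambda)$, the identity becomes $\lambda A\,[\phi,T]=4\lambda^2\phi(T-I)$. Provided $A\neq 0$, this gives $[\phi,T]=(4\lambda/A)\,\phi(T-I)$ on $\mathscr{D}_h(\lambda)$, and evaluating on $e_i$ yields $T\phi e_i=t_i\phi e_i$ with $t_i=\nu_i+\frac{4\lambda(1-\nu_i)}{A}$, producing the required common eigenbasis.

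The final step is to extract the Boeckx condition. Using $A-4\lambda=B$, a short manipulation converts the eigenvalue relation into $1-t_i=(B/A)(1-\nu_i)$. By Lemma \ref{lem:Tprop}(\ref{lem:Tprop2}) both $1-\nu_i$ and $1-t_i$ are strictly positive, forcing $B/A>0$, i.e.\ $AB>0$. Since $AB=(2-\mu)^2-(2\lambda)^2$ and $I_M=(2-\mu)/(2\lambda)$, this is exactly $|I_M|>1$. The degenerate cases $A=0$ and $B=0$ have to be ruled out separately, but each collapses the commutator identity into the equation $\nu_i=1$ (respectively $t_i=1$), again contradicting Lemma \ref{lem:Tprop}(\ref{lem:Tprop2}). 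The hard part will be careful sign-tracking inside the commutator identity: the signs in $[h,\phi h]=-2\lambda^2\phi$, in the two distinct expressions for $[\phi h,T]$, and in the overall identity $[h,\mathcal{L}_\xi T]=-[\mathcal{L}_\xi h,T]$ all have to line up, since a single flipped sign invalidates the positivity step that delivers $|I_M|>1$.
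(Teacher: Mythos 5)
Your proposal is correct and follows essentially the same route as the paper: Lie-differentiate the relation $hT=Th$, substitute the formulas for $\mathcal{L}_{\xi}T$ and $\mathcal{L}_{\xi}h$, evaluate on an orthonormal eigenbasis of $T\vert_{\mathscr{D}_h(\lambda)}$ to show each $\phi e_i$ is a $T$-eigenvector with $1-t_i=\frac{I_M-1}{I_M+1}(1-\nu_i)$, and then invoke the bound ``eigenvalues of $T$ are $<1$'' to force $\vert I_M\vert>1$. Your commutator bookkeeping and your separate treatment of the degenerate cases $A=0$, $B=0$ (i.e.\ $I_M=\mp 1$) check out and reproduce the paper's equation \eqref{eq:line} exactly.
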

\begin{proof}
    Since $T\vert_{\mathscr{D}(\lambda)}$ is a symmetric operator, it has an eigenbasis $\{e_{1}, e_{2}, \dots ,e_{n} \}$ of orthonormal vectors. For every $i\in \{1,2,\dots, n\}$, let $\lambda_{i}$ be the eigenvalue corresponding to the eigenvector $e_{i}$. We claim that $\{e_{1}, e_{2}, \dots , e_{n}, \phi e_{1}, \phi e_{2}, \dots , \phi e_{n}\}$ is the required basis.
    In order to prove this claim, it is sufficient to show that $\phi e_{i}$ is an eigenvector of $T$ for all $i\in\{1,2,\dots, n\}$. To see this, we consider the equation $hT = Th$ and take its Lie derivative along $\xi$ to get
    \begin{flalign*}
        0 =& \mathcal{L}_{\xi}(Th)-\mathcal{L}_{\xi}(hT)\\
        =&   (\mathcal{L}_{\xi}T)h + T(\mathcal{L}_{\xi}h) -(\mathcal{L}_{\xi}h)T - h(\mathcal{L}_{\xi}T)\\
        =&
        (2\phi hT - 2\phi h-2(\xi f)(I-T))h  + T((2-\mu)\phi h+2(1-\kappa)\phi)\\
        & 
        -((2-\mu)\phi h+2(1-\kappa)\phi )T  -h(2\phi hT - 2\phi h-2(\xi f)(I-T)).
    \end{flalign*}
    Rearranging the above equation and evaluating it at $e_{i}$ using $he_{i} = \lambda e_{i}$ and $Te_{i}=\lambda_{i}e_{i}$, we get,
    \begin{equation}\label{eq:eigT}
        (I_{M}+1)T(\phi e_{i})=(\lambda_{i}(I_{M}-1)+2)
    \end{equation}
    For $I_{M}=-1$, we get $\lambda_{i}=1$, which is a contradiction to the fact that the eigenvalues of $T$ are strictly lesser than 1.
    For all other values of $I_{M}$, equation \eqref{eq:eigT} can be rewritten as
    \begin{equation}\label{eq:eigT2}
        T(\phi e_{i})=\left(\frac{\lambda_{i}(I_{M}-1)+2}{1+I_{M}}\right)\phi e_{i}.
    \end{equation}
    Thus, $\phi e_{i}$ is an eigenvector of $T$, as required. Let $\lambda_{n+i}$ denote the eigenvalue corresponding to the eigenvector $\phi e_{i}$ of $T$. By equation \eqref{eq:eigT2}, we have
    \begin{equation}\label{eq:line}
        (I_{M}-1)\lambda_{i}-(1+I_{M})\lambda_{n+i}+2=0.
    \end{equation}
    Equation \eqref{eq:line} is an equation of a straight line in the variables $\lambda_{i}$ and $\lambda_{n+i}$. The line has slope $\left(\frac{1+I_{M}}{I_{M}-1}\right)$ and passes through the point $(1,1)$. Since we need the eigenvalues to be strictly lesser than $1$, we need the slope of the above line to be strictly positive. Equivalently, $\vert I_{M} \vert > 1.$
\end{proof}
As a part of the proof of Lemma \ref{lem:comphibasis}, we have also proved the following result, which we state separately for clarity.
\begin{corollary}\label{lem:line}
    Let $\{e_{1}, e_{2}, \dots ,e_{n}, \phi e_{1},\phi e_{2}, \dots , \phi e_{n}\}$ be the $\phi$-basis constructed in Lemma \ref{lem:comphibasis}. For every $i\in \{1,2,\dots ,n\}$, let $\lambda_{i}$ be the eigenvalue of $T$ corresponding to $e_{i}$ and let $\lambda_{n+i}$ be the eigenvalue of $T$ corresponding to $\phi e_{i}$. Then,
    \begin{equation}
        (I_{M}-1)\lambda_{i}-(1+I_{M})\lambda_{n+i}+2=0.
    \end{equation}
\end{corollary}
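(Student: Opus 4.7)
The plan is to observe that this corollary is really a rearrangement of equation \eqref{eq:eigT2}, which already appears inside the proof of Lemma \ref{lem:comphibasis}. Since that lemma establishes the existence of the simultaneous eigenbasis $\{e_i, \phi e_i\}$ and furnishes the formula $T(\phi e_i) = \frac{\lambda_i(I_M - 1) + 2}{1+I_M}\,\phi e_i$, the corollary follows by setting this coefficient equal to $\lambda_{n+i}$ and clearing denominators. My proposal is to present the derivation of the key scalar identity cleanly and independently of the existence statement, so that the linear relation stands on its own.

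Concretely, I would start from $hT = Th$ and differentiate both sides along $\xi$, expanding via the Leibniz rule to obtain
\begin{equation*}
(\mathcal{L}_\xi h) T + h(\mathcal{L}_\xi T) \;=\; (\mathcal{L}_\xi T) h + T(\mathcal{L}_\xi h).
\end{equation*}
Next I would substitute the two available formulas: $\mathcal{L}_\xi T = 2\phi hT - 2\phi h - 2(\xi f)(I - T)$ from the third part of Lemma \ref{lem:Tprop}, and $\mathcal{L}_\xi h = (2-\mu)\phi h + 2(1-\kappa)\phi$ from Theorem \ref{thm:ghprop}. This produces an operator identity on $\Gamma(\mathrm{Ker}(\eta))$ involving only $h$, $T$, and $\phi$ together with the constants $\kappa,\mu$ and the scalar $\xi f$.

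The decisive step is to evaluate this identity at the eigenvector $e_i \in \mathscr{D}_h(\lambda)$, using $he_i = \lambda e_i$, $Te_i = \lambda_i e_i$, the anticommutation $h\phi = -\phi h$ (so $h\phi e_i = -\lambda\phi e_i$), and $T\phi e_i = \lambda_{n+i}\phi e_i$. The $(\xi f)$-contributions cancel, since on $e_i$ both sides of the bracket contain the commutator $[h,(I-T)]e_i = \lambda(1-\lambda_i)e_i - (1-\lambda_i)\lambda e_i = 0$. What remains is a scalar multiple of $\phi e_i$ on each side. Matching coefficients and normalizing by the nonzero factor $\lambda = \sqrt{1-\kappa}$, then folding $\mu$ and $\kappa$ into the single quantity $I_M = (1 - \mu/2)/\sqrt{1-\kappa}$, produces
\begin{equation*}
(1 + I_M)\,\lambda_{n+i} \;=\; (I_M - 1)\,\lambda_i + 2,
\end{equation*}
which is exactly the stated relation after transposition.

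The only real obstacle is careful bookkeeping: one must verify that the $(\xi f)$-terms indeed cancel on the eigenvector, and that the $\kappa$- and $\mu$-dependent coefficients of $\phi e_i$ reassemble precisely into $I_M - 1$ and $1 + I_M$. Both checks are routine once the substitutions are made, and the conclusion is then immediate.
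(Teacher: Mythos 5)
Your proposal is correct and is essentially the paper's own argument: the paper obtains this corollary as a byproduct of the proof of Lemma \ref{lem:comphibasis}, where exactly this computation (Lie-differentiating $hT=Th$, substituting the formulas for $\mathcal{L}_{\xi}T$ and $\mathcal{L}_{\xi}h$, and evaluating on $e_{i}$) yields equation \eqref{eq:eigT2}, from which the linear relation follows by reading off the coefficient as $\lambda_{n+i}$. Your bookkeeping checks out, including the cancellation of the $(\xi f)$-terms, which in fact vanish identically since $[h,I-T]=-[h,T]=0$ by hypothesis.
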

Now, we prove a result which shows that the eigenvalues $\left\{ \lambda_{i} \right\}_{i=1}^{2n}$ mentioned above satisfy another set of equations. This allows us to determine the error tensor $T$ in terms of the function $f$ in equation \eqref{eq:main}.
\begin{lemma}\label{lem:hyp}
    Let $\lambda_{i}$ be the eigenvalues of $T$ mentioned in Corollary \ref{lem:line}. If $(T,f,g)$ is a solution to equation \eqref{eq:main}, then
    \begin{equation}\label{eq:hyp}
        (1-\lambda_{i})(1-\lambda_{n+i})=e^{4f}
    \end{equation}
\end{lemma}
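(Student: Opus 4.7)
The plan is to use the Boothby--Wang identity $\pi^{\ast}\omega = d\eta$ together with $g(\cdot, J\cdot) = \omega$ to pin down how $J$ acts on the pushed-down basis $\{d\pi e_i, d\pi \phi e_j\}_{i,j}$ of $T_{\pi(p)}B$; once $J$ is written down explicitly in this basis, the identity $J^{2} = -I$ yields \eqref{eq:hyp} immediately.

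First I would observe that $\{d\pi e_i, d\pi \phi e_j\}$ is a $g$-orthogonal basis of $T_{\pi(p)}B$: by Lemma \ref{lem:commeq2}, $g(d\pi e_i, d\pi\phi e_j) = 0$ for all $i,j$, while \eqref{eq:main2} applied within each Legendrian half yields orthogonality there as well. In addition, the calculation already carried out in part (\ref{lem:Tprop2}) of Lemma \ref{lem:Tprop} applies to the unit eigenvectors $e_i$ and $\phi e_i$ of $T$ and gives
\[
e^{2f}\|d\pi e_i\|_g^{2} = 1-\lambda_i, \qquad e^{2f}\|d\pi\phi e_i\|_g^{2} = 1-\lambda_{n+i}.
\]

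Next I would use $\omega(d\pi X, d\pi Y) = d\eta(X,Y) = \tilde{g}(X,\phi Y)$ to record the matrix of $\omega$ in this basis. Because $\phi$ exchanges $\mathscr{D}_h(\lambda)$ and $\mathscr{D}_h(-\lambda)$ and $\phi^2 = -I+\eta\otimes\xi$, the only nonvanishing pairings are between the two halves, and one finds $\omega(d\pi e_i, d\pi\phi e_j) = -\delta_{ij}$. Expanding $J\,d\pi e_j$ and $J\,d\pi\phi e_j$ in the $g$-orthogonal basis above and reading off each coefficient via $g(X,JY) = \omega(X,Y)$ forces
\[
J\,d\pi e_j \;=\; \frac{e^{2f}}{1-\lambda_{n+j}}\,d\pi\phi e_j, \qquad J\,d\pi\phi e_j \;=\; -\,\frac{e^{2f}}{1-\lambda_{j}}\,d\pi e_j.
\]

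Finally, applying $J^{2} = -I$ to $d\pi e_j$ and composing the two formulas gives $e^{4f}/\bigl((1-\lambda_j)(1-\lambda_{n+j})\bigr) = 1$, which is \eqref{eq:hyp}. The only delicate point is the sign bookkeeping in the second step, where one must track the conventions $d\eta(X,Y) = \tilde{g}(X,\phi Y)$, $\phi^{2} = -I + \eta\otimes\xi$ and $g(X,JY) = \omega(X,Y)$ simultaneously; conceptually, however, the lemma is just the statement that $J$ must swap the two pushed-down Legendrian halves in the unique way consistent with being an almost complex structure compatible with $\omega$ and $g$.
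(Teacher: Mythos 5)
Your proposal is correct and follows essentially the same route as the paper: both use $\pi^{*}\omega = d\eta$ and the compatibility $g(\cdot,J\cdot)=\omega$ to derive $Jf_{i}=\tfrac{e^{2f}}{1-\lambda_{n+i}}f_{n+i}$ and $Jf_{n+i}=-\tfrac{e^{2f}}{1-\lambda_{i}}f_{i}$, then conclude via $J^{2}=-I$. The only cosmetic difference is that you expand $J$ downstairs in the $g$-orthogonal basis $\{d\pi e_{i}, d\pi\phi e_{i}\}$ using the norms $e^{2f}\Vert d\pi e_{i}\Vert^{2}=1-\lambda_{i}$, whereas the paper performs the equivalent computation upstairs with the lifted vector $\widetilde{Jd\pi e_{i}}$.
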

\begin{proof}
    Since $\pi$ is the projection map of the Boothby-Wang fibration, we have $\pi^{*}\omega = d\eta$. For vectors $v,w \in Ker(\eta)$,
    \begin{flalign}
        \Tilde{g}(v,\phi w) = d\eta(v,w) &= \pi^{*}\omega(v,w)\nonumber \\
        &= \omega (d\pi (v), d\pi (w))\nonumber \\
        &= g(d\pi (v), J\left(d\pi (w)\right))\nonumber \\
        &= \frac{1}{e^{2f}}\left( \Tilde{g}(v-Tv, \widetilde{J d\pi (w)})\right).\label{eq:eq2}
    \end{flalign}
    Consider the basis constructed in the proof of Lemma \ref{lem:comphibasis}. Substituting $w=e_1$ and varying $v$ in the elements of the chosen basis, we get,
    \begin{equation*}
        \Tilde{g}(v, \widetilde{Jd\pi(e_1)}) = 0 \text{ for all }v\in \{e_1, e_2, \dots, e_n, \phi e_2, \dots, \phi e_n\}.
    \end{equation*}
    For $v=\phi e_{1}$, we have
    \begin{flalign*}
        1=\Tilde{g}(\phi e_{1},\phi e_{1})&=\frac{1}{e^{2f}}\left(\Tilde{g}(\phi e_{1}-T\phi e_{1},\widetilde{Jd\pi (e_{1})} \right)\\
        &= \left(\frac{1-\lambda_{n+1}}{e^{2f}}\right)\Tilde{g}(\phi e_{1},\widetilde{Jd\pi (e_{1})})
    \end{flalign*}
    Thus, $\widetilde{Jd\pi (e_1)} = \left(\frac{e^{2f}}{1-\lambda_{n+1}}\right)\phi e_{1}$. Let $\{f_1, f_2, \dots, f_{2n}\}$ be the image of the chosen basis under $d\pi$, with elements in the same order. We have, $\widetilde{Jf_1}=\left(\frac{e^{2f}}{1-\lambda_{n+1}}\right) \phi e_{1}$. Applying the derivative map of $\pi$, we get $Jf_1 = \left(\frac{e^{2f}}{1-\lambda_{n+1}}\right)f_{n+1}$. By varying both $v$ and $w$ in equation \eqref{eq:eq2} from the elements of the chosen basis and performing similar computations, we can conclude that
    \begin{equation}\label{eq:J1}
        Jf_{i} = \left(\frac{e^{2f}}{1-\lambda_{n+i}}\right)f_{n+i},\qquad Jf_{n+i} = -\left(\frac{e^{2f}}{1-\lambda_{i}}\right)f_{i},
    \end{equation}
    for all $i\in\{1,2, \dots, n\}$. Since $J$ is an almost complex structure, we have, for all $i\in\{1,2,\dots, n\}$,
    \begin{flalign*}
        -f_{i} =& J^{2}f_i \\
        =&J\left( \left(\frac{e^{2f}}{1-\lambda_{n+i}}\right)f_{n+i} \right)\\
        =&\left( \frac{e^{2f}}{1-\lambda_{n+i}} \right)\left( -\frac{e^{2f}}{1-\lambda_{i}}f_{i} \right)
    \end{flalign*}
    Therefore, we can conclude that the eigenvalues of $T$ in this case satisfy the equation
    \begin{equation}
        (1-\lambda_{i})(1-\lambda_{n+i})=e^{4f}.
    \end{equation}
\end{proof}
The intersection of the solution sets to equations \eqref{eq:line} and \eqref{eq:hyp} consists of two points. But due to the condition given in Lemma \ref{lem:Tprop} \ref{lem:Tprop2}, only one of the solutions is permissible. See Figure \ref{fig:eigsol} for a representative depiction.
\begin{figure}[!htbp]
    \centering
        \begin{tikzpicture}[scale=0.5]
            \begin{axis}[
                x=1cm, y=1cm,
                axis lines=middle,
                axis line style={<->},
                ymin=-7, ymax=9,
                xmin=-9, xmax=11,
                yticklabel=\empty,
                xticklabel=\empty,
                xtick style = {draw=none},
                ytick style = {draw=none},
                xlabel = {$\lambda_{i}$},
                ylabel = {$\lambda_{n+i}$},
                x label style={anchor=west},
                y label style={anchor=south},
                ]
                \addplot [
                    domain=-10:0.9,
                    samples=200
                ] {1+2/((x)-1)};
                \addplot[
                    domain=1.1:12,
                    samples=200,
                ] {1+2/((x)-1)};
                \addplot[
                    domain=-9:11,
                    dashed,
                    ]{1};
                \addplot [
                    mark=none,
                    dashed,
                    ] coordinates {(1, \pgfkeysvalueof{/pgfplots/ymin}) (1, \pgfkeysvalueof{/pgfplots/ymax})};
                \addplot[
                    domain=-12:12,
                    ]{(x+2)/3};
            \end{axis}
        \end{tikzpicture}
    \caption{The graphs of curves determining the eigenvalues of $T$ for $\vert I_{M} \vert > 1$.}
    \label{fig:eigsol}
\end{figure}
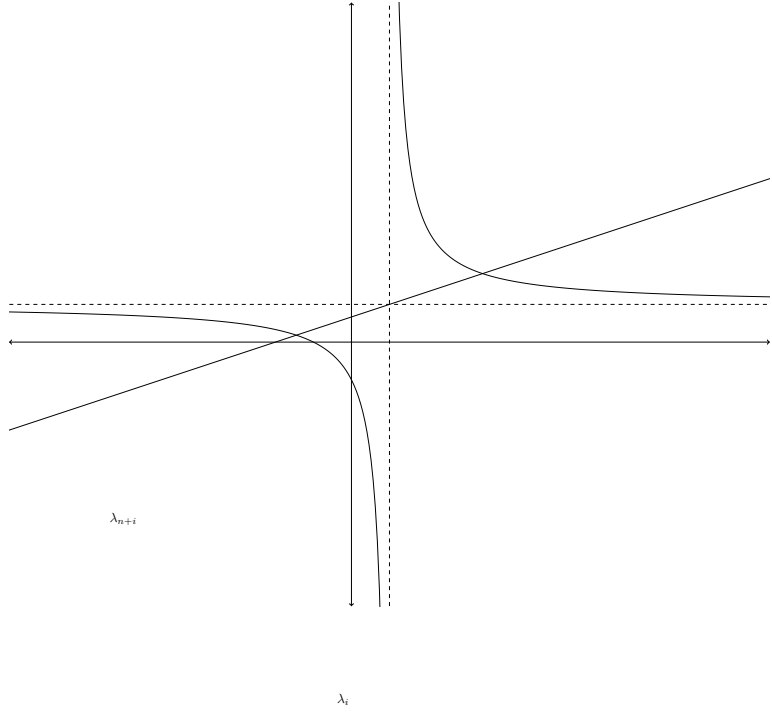
Thus, the above results determine the error tensors which arise as solutions to equation \eqref{eq:main} and commute with the tensor field $h$. More precisely, we have the following result.
\begin{lemma}\label{lem:T}
    Suppose $(T,f,g)$ is a solution to equation \eqref{eq:main} and $hT=Th$. Then,
    \begin{equation}\label{eq:T}
        T\vert_{\mathscr{D}(\lambda)}=\left(\frac{ 1-\sqrt{\frac{I_M+1}{I_M-1}}e^{2f}}{\lambda}\right) h\vert_{\mathscr{D}(\lambda)}\qquad T\vert_{\mathscr{D}(-\lambda)}=\left(\frac{1-\sqrt{\frac{I_M-1}{I_M+1}}e^{2f}}{\lambda}\right)h\vert_{\mathscr{D}(-\lambda)} 
    \end{equation}
\end{lemma}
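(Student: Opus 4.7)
The plan is to solve the two constraints on the eigenvalues of $T$ already established — the linear relation from Corollary \ref{lem:line} and the product relation from Lemma \ref{lem:hyp} — simultaneously, and then translate the resulting scalar eigenvalues into operator identities on the two Legendrian eigenspaces of $h$. Since both relations involve only the scalars $f$, $I_M$, $\lambda_i$, $\lambda_{n+i}$ (no dependence on the index $i$), we will find that $T$ must act as a scalar on each of $\mathscr{D}(\lambda)$ and $\mathscr{D}(-\lambda)$.

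Concretely, I would first rewrite the linear equation $(I_M-1)\lambda_i - (1+I_M)\lambda_{n+i} + 2 = 0$ in the equivalent form
\[
1 - \lambda_{n+i} \;=\; \frac{I_M-1}{I_M+1}\,(1 - \lambda_i),
\]
noting that the coefficient $(I_M-1)/(I_M+1)$ is strictly positive precisely because $|I_M| > 1$ by Lemma \ref{lem:comphibasis}. Substituting this into the hyperbolic identity $(1-\lambda_i)(1-\lambda_{n+i}) = e^{4f}$ immediately yields
\[
(1-\lambda_i)^2 \;=\; \frac{I_M+1}{I_M-1}\,e^{4f},
\]
and the sign ambiguity in the square root is resolved by Lemma \ref{lem:Tprop}(\ref{lem:Tprop2}), which forces $1-\lambda_i > 0$. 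The positive root then yields an explicit expression for $\lambda_i$, and the linear relation above gives the corresponding expression for $\lambda_{n+i}$; both depend only on $f$ and $I_M$, so in particular are independent of the index~$i$.

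The final step is cosmetic. Because the eigenvalue of $T$ on $\mathscr{D}(\lambda)$ is the common scalar $\lambda_i$ for every $e_i$, and similarly $\lambda_{n+i}$ is common across $\mathscr{D}(-\lambda)$, the restrictions $T|_{\mathscr{D}(\pm\lambda)}$ are scalar multiples of the identity, and hence scalar multiples of $h|_{\mathscr{D}(\pm\lambda)}$ (which acts as $\pm\lambda$ times the identity on the respective eigenspace). Dividing by the eigenvalue of $h$ on each piece produces the two displayed operator identities, with the sign on $\mathscr{D}(-\lambda)$ determined by $h|_{\mathscr{D}(-\lambda)} = -\lambda\,I$. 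I do not anticipate any substantive obstacle here: the entire argument is short algebra built on previously derived equations, and the only delicate point is the correct square-root sign, which is pinned down by the strict upper bound on the eigenvalues of $T$.
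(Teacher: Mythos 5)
Your proposal is correct and follows essentially the same route as the paper: both solve the linear relation of Corollary \ref{lem:line} simultaneously with the product relation $(1-\lambda_i)(1-\lambda_{n+i})=e^{4f}$ of Lemma \ref{lem:hyp}, and discard the inadmissible root using the bound $\lambda_i<1$ from Lemma \ref{lem:Tprop}. The only caveat concerns the final cosmetic step: dividing by $h\vert_{\mathscr{D}(-\lambda)}=-\lambda\, I$ as you describe actually produces the coefficient $-\lambda_{n+i}/\lambda$ rather than $+\lambda_{n+i}/\lambda$, so the second displayed identity as printed carries a sign discrepancy that originates in the paper's statement, not in your argument.
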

\begin{proof}
    Under the hypothesis of the statement, we know that the eigenvalues of the error tensor satisfy equations \eqref{eq:line} and \eqref{eq:hyp}. Solving the equations simultaneously gives two solutions for each eigenvalue of $T$. Out of the two solutions, only one of them is lesser than 1. Thus, there is only one acceptable solution. A straightforward computation shows that the acceptable solution set for the eigenvalues $\left\{\lambda_{i} \right\}_{i=1}^{2n}$ of $T$ are given by
    \begin{equation*}
        \lambda_{i}= 1-\sqrt{\frac{I_M+1}{I_M-1}}e^{2f}\text{ and }\lambda_{n+i}= 1-\sqrt{\frac{I_M-1}{I_M+1}}e^{2f}.
    \end{equation*}
    Thus, the error tensor has two eigenspaces of dimension $n$ each, which coincide with the eigenspaces of the tensor field $h$. Therefore, we have the desired expressions for $T$.
\end{proof}
The function $f$ in equation \eqref{eq:main} determines $T$ completely. We now prove a result which establishes rigidity of the metric $g$ appearing in the solutions to equation \eqref{eq:main}.
\begin{lemma}\label{lem:uniqmet}
    Suppose $(T,f,g)$ is a solution to equation \eqref{eq:main} and $hT=Th$. Then,
    \begin{equation}\label{eq:uniqmet}
        g(X,Y) = \frac{\lvert I_{M} \rvert}{\sqrt{I_{M}^{2}-1}}\left(\tilde{g}(\tilde{X},\tilde{Y})+\tilde{g}\left(\tilde{X}, \left(\frac{h}{1-\tfrac{\mu}{2}}\right)\tilde{Y}\right) \right),
    \end{equation}
    for all vector fields $X,Y$ on $B$.
\end{lemma}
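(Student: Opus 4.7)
The plan is to compute $g$ directly by combining the rearranged form of the defining equation \eqref{eq:main} with the explicit description of $T$ from Lemma \ref{lem:T}, and then to recognize the answer as the closed expression \eqref{eq:uniqmet}. Under the hypothesis $hT = Th$, the Legendrian eigenspaces $\mathscr{D}(\pm\lambda)$ of $h$ are preserved by $T$ (Lemma \ref{lem:commeq1}), so it suffices to determine $g(d\pi\tilde X, d\pi\tilde Y)$ for horizontally lifted $\tilde X,\tilde Y$ in each eigenspace and for one cross-term.

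First, rewrite \eqref{eq:main} as $g(d\pi X, d\pi Y) = e^{-2f}\,\tilde g((I-T)X, Y)$ for $X,Y\in\Gamma(Ker(\eta))$. Since $h=\lambda I$ on $\mathscr{D}(\lambda)$ and $h=-\lambda I$ on $\mathscr{D}(-\lambda)$, Lemma \ref{lem:T} yields
\begin{equation*}
(I-T)|_{\mathscr{D}(\lambda)} = \sqrt{\tfrac{I_M+1}{I_M-1}}\,e^{2f}\,I,\qquad (I-T)|_{\mathscr{D}(-\lambda)} = \sqrt{\tfrac{I_M-1}{I_M+1}}\,e^{2f}\,I.
\end{equation*}
Plugging these into the rearranged equation, the exponential factor cancels and we obtain
\begin{equation*}
g(d\pi\tilde X, d\pi\tilde Y) = \sqrt{\tfrac{I_M+1}{I_M-1}}\,\tilde g(\tilde X,\tilde Y)\ \text{on }\mathscr{D}(\lambda), \quad g(d\pi\tilde X, d\pi\tilde Y) = \sqrt{\tfrac{I_M-1}{I_M+1}}\,\tilde g(\tilde X,\tilde Y)\ \text{on }\mathscr{D}(-\lambda),
\end{equation*}
while the cross-term vanishes by Lemma \ref{lem:commeq2}. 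In particular $g$ is completely determined by $\tilde g$, $\kappa$ and $\mu$, and is therefore independent of the choice of $f$ and $T$.

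It remains to repackage this eigenspace description into the single formula \eqref{eq:uniqmet}. Using $1-\tfrac{\mu}{2} = I_M\lambda$, the operator $h/(1-\tfrac{\mu}{2})$ acts as $1/I_M$ on $\mathscr{D}(\lambda)$ and as $-1/I_M$ on $\mathscr{D}(-\lambda)$, so the right-hand side of \eqref{eq:uniqmet} becomes $\tfrac{|I_M|}{\sqrt{I_M^2-1}}\cdot\tfrac{I_M\pm 1}{I_M}\,\tilde g(\tilde X,\tilde Y)$ on $\mathscr{D}(\pm\lambda)$, and $0$ on mixed pairs because eigenspaces of distinct $h$-eigenvalues are $\tilde g$-orthogonal. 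A short case analysis separating $I_M>1$ and $I_M<-1$ — using $\tfrac{|I_M|}{I_M}=\mathrm{sgn}(I_M)$ and the fact that under $|I_M|>1$ both $(I_M+1)/(I_M-1)$ and $(I_M-1)/(I_M+1)$ are positive — reduces $\mathrm{sgn}(I_M)(I_M\pm 1)/\sqrt{(I_M-1)(I_M+1)}$ to $\sqrt{(I_M\pm 1)/(I_M\mp 1)}$, matching the expressions obtained above.

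The only subtlety I anticipate is the sign bookkeeping in the last step, since for $I_M<-1$ the factors $I_M\pm 1$ are negative and one must be careful in pulling them under square roots; this is handled by writing the coefficient as $\sqrt{(I_M\pm 1)^2/((I_M-1)(I_M+1))}$ so that the square root is manifestly the positive quantity $\sqrt{(I_M\pm 1)/(I_M\mp 1)}$. Beyond that, the proof is a direct substitution: the entire content lies in Lemma \ref{lem:T}, which fixes $T$, and in the cancellation of $e^{2f}$ in \eqref{eq:main2}, which is what produces the rigidity.
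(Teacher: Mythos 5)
Your proof is correct and follows essentially the same route as the paper: both rest on the eigenvalues of $T$ supplied by Lemma \ref{lem:T} and on the cancellation of $e^{2f}$ in $g(d\pi X,d\pi Y)=e^{-2f}\,\tilde g((I-T)X,Y)$, the paper merely phrasing this as independence of the induced $J$ from $f$ and then setting $f\equiv 0$, while you substitute directly for general $f$; your sign bookkeeping for $|I_M|>1$ and the vanishing of the cross-terms via Lemma \ref{lem:commeq2} both check out. One small point: you correctly take $T|_{\mathscr{D}(-\lambda)}$ to have eigenvalue $1-\sqrt{(I_M-1)/(I_M+1)}\,e^{2f}$, which is what the proof of Lemma \ref{lem:T} actually establishes, even though the displayed formula \eqref{eq:T} as printed (with $h=-\lambda I$ on $\mathscr{D}(-\lambda)$ and $+\lambda$ in the denominator) would yield the opposite sign.
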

\begin{proof}
    In the proof of Lemma \ref{lem:hyp}, we have obtained a basis $\{f_{i}\}_{i=1}^{2n}$ and an expression for the almost complex structure $J$ in terms of this basis (equation \eqref{eq:J1}). Combining this with the values of the eigenvalues of $T$ obtained in the proof of Lemma \ref{lem:T}, we get, for $i \in \{1,2,\dots, n\}$,
    \begin{equation*}
        Jf_{i}= \sqrt{\frac{I_M+1}{I_M-1}}f_{n+i}\text{ and }Jf_{n+i}=\sqrt{\frac{I_M-1}{I_M+1}}f_{i}.
    \end{equation*}
    Since the expression for $J$ in the above basis is independent of $f$ and $T$, we conclude that any solution to equation \eqref{eq:main} comprises a unique metric $g$. In order to obtain an expression for the unique metric, we may substitute $f\equiv 0$ in equation \eqref{eq:main} and simplify in order to obtain the desired result.
\end{proof}

The implications of the imposition of the condition $hT=Th$ are summarized in Theorem \ref{thm:main} and proved above (Lemma \ref{lem:comphibasis}, Lemma \ref{lem:T} and Lemma \ref{lem:uniqmet}).
\begin{remark}\label{rem:cansas1}
    The metric $g$ described in the third part of Theorem \ref{thm:main} can be checked to be a Kähler metric. Naturally, it corresponds to a Sasakian metric on $M$, by \cite{SKC}. This Sasakian metric has also been studied, in a different context, in \cite{SPS} where the authors describe it as the canonical Sasakian structure associated to the $(\kappa , \mu)$-structure on $M$.
\end{remark}
\begin{remark}\label{rem:cansas2}
    In \cite{CFKU}, the authors describe the canonical structure of the base space of a simply connected, complete $(\kappa,\mu)$-manifold. For $\vert I_{M} \vert >1$, the base space admits the structure of the complexification of a sphere. The structure tensors of the space obtained by the authors coincide with tensors obtained as the unique solution obtained here. 
\end{remark}
\noindent Theorem \ref{thm:main} can be combined with Lemma \ref{lem:commeq2} to obtain the following result.
\begin{corollary}\label{cor:rigbase}
    There is a unique Riemannian metric $g$ on the base space of a Boothby-Wang fibration of a compact regular $(\kappa,\mu)$-manifold which satisfies the condition
    \begin{equation}\label{eq:basecon}
        g(d\pi X,d\pi Y)= 0 \text{ whenever }X\in\mathscr{D}_{h}(\lambda), Y\in\mathscr{D}_{h}(-\lambda).
    \end{equation}
    Furthermore, $g$ defines a Kähler structure on the base space.
\end{corollary}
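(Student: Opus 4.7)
The plan is to reduce the corollary to the rigidity statement in Theorem \ref{thm:main} via the translation provided by Lemma \ref{lem:commeq2}. Given any Riemannian metric $g$ on $B$ which is compatible with $\omega$ and satisfies the orthogonality condition \eqref{eq:basecon}, I would manufacture a solution of \eqref{eq:main} from it. The simplest choice is to set $f \equiv 0$ on $M$ and let $T$ be the unique $(1,1)$-tensor field on $\Gamma(\mathrm{Ker}(\eta))$ determined by
\begin{equation*}
    \tilde{g}(TX, Y) = \tilde{g}(X, Y) - g(d\pi X, d\pi Y),
\end{equation*}
whose existence and uniqueness follow from non-degeneracy of $\tilde{g}$. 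Then $(T, 0, g)$ is tautologically a solution of \eqref{eq:main}.

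Next, I would invoke Lemma \ref{lem:commeq2}: condition \eqref{eq:basecon} is equivalent to $hT = Th$, so the solution just constructed satisfies the hypothesis of Theorem \ref{thm:main}. The first bullet of that theorem forces $\lvert I_M \rvert > 1$, and the third bullet asserts that the metric $g$ in any such solution is independent of the choice of $f$ and $T$. Consequently, if $g_1$ and $g_2$ both satisfy \eqref{eq:basecon}, the corresponding solutions $(T_1, 0, g_1)$ and $(T_2, 0, g_2)$ to \eqref{eq:main} must share the same base metric, giving $g_1 = g_2$. This yields the uniqueness half of the corollary. Concretely, $g$ must coincide with the explicit formula \eqref{eq:uniqmet} of Lemma \ref{lem:uniqmet}.

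For existence, I would simply exhibit \eqref{eq:uniqmet} as the candidate. Since $\lvert I_M \rvert > 1$ the coefficient $\lvert I_M \rvert / \sqrt{I_M^2 - 1}$ is well defined, and the expression descends to a symmetric bilinear form on $B$ because the bracket $\tilde{g}(\tilde{X}, \tilde{Y}) + \tilde{g}(\tilde{X}, (1-\mu/2)^{-1}h\tilde{Y})$ is manifestly $\xi$-invariant (using $\mathcal{L}_\xi h = (2-\mu)\phi h + 2(1-\kappa)\phi$ from Theorem \ref{thm:ghprop} together with $\mathcal{L}_\xi \tilde{X} = 0$). The Kähler property is the content of Remark \ref{rem:cansas1}, which identifies the metric with the base of the canonical Sasakian structure associated to the $(\kappa,\mu)$-manifold in the sense of \cite{SPS}; alternatively one verifies this directly from the Hatakeyama correspondence applied to the Sasakian lift.

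The main technical obstacle is verifying that \eqref{eq:uniqmet} is positive definite (not merely non-degenerate) on $B$. Restricted to the image under $d\pi$ of each eigenspace $\mathscr{D}_h(\pm\lambda)$, the formula reduces to $\tilde{g}$ scaled by $1 \pm \lambda/(1-\mu/2)$, so positivity of both factors is equivalent to $\lambda < \lvert 1 - \mu/2 \rvert$, which is precisely the inequality $\lvert I_M \rvert > 1$ repackaged. Thus the same arithmetic obstruction encountered in Lemma \ref{lem:comphibasis} is exactly what makes the existence portion go through, and no additional work beyond tracking signs is required.
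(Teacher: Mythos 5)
Your proposal is correct and follows essentially the same route as the paper, which derives the corollary by combining Lemma \ref{lem:commeq2} (translating the orthogonality condition \eqref{eq:basecon} into $hT=Th$ for the error tensor of the tautological solution $(T,0,g)$) with the rigidity statement of Theorem \ref{thm:main} and the explicit formula of Lemma \ref{lem:uniqmet}. Your additional checks of $\xi$-invariance and positive definiteness are details the paper leaves implicit, but they do not change the argument.
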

The above corollary, along with the correspondence between Kähler and Sasakian manifolds can be used to obtain Theorem \ref{thm:rig}.
\begin{proof}[Proof of Theorem \ref{thm:rig}]
    Suppose $(M,\eta , \Bar{g},\Bar{\phi})$ is a $K$-contact structure as specified in the above statement. The metric $\Bar{g}$ induces a Riemannian metric $g$ on the base space of the Boothby-Wang fibration of $M$ (See Section \ref{subsec:CMM} for details). In this case, the corresponding projection map $\pi$ is a Riemannian submersion. Thus, the Riemannian metric $g$ satisfies the condition specified in equation \eqref{eq:basecon}. Thus, we conclude that $g$ is the unique Kähler metric obtained as a solution to equation \eqref{eq:main}. This forces $\Bar{g}= \pi^{*}g$. The second statement follows from remark \ref{rem:cansas1}.
\end{proof}
\section{Semi-Riemannian metrics on the base space}\label{sec:semi-riem}
We have seen that there are no associated metrics on the base space (under the Boothby-Wang fibration) of a regular closed $(\kappa,\mu)$-manifold with Boeckx index $\vert I_{M} \vert > 1$ for which the associated error tensor preserves the Legendrian distributions given by the eigenspaces of $h$. So we look at the possibilities of semi-Riemannian metrics on the base space associated with the symplectic structure and corresponding error tensors. As before, we start out with the assumption that $\pi:M\rightarrow B$ is a semi-Riemannian submersion. Suppose the metric on $B$ has vectors of negative length. Then, we get vectors with negative length on the total space, as well, which is not possible. Thus, $B$ is forced to be a Riemannian manifold and we land in the previous case. In light of this observation, we define the error tensor, as before, by the equation,
\begin{equation}
    \Tilde{g}(T\tilde{X}, \tilde{Y}) = \Tilde{g}(\tilde{X},\tilde{Y})-g(X, Y),
\end{equation}
where $X,Y\in \Gamma(B)$. Since the analysis follows a path similar to the one in the previous section, we will skip the proofs of the statements which follow directly from similar computations. Symmetry of the tensor field $T$ follows immediately. A formula for the Lie derivative of $T$ can also be obtained by carrying out computations similar to the ones in the proof of Lemma \ref{lem:Tprop}. However, the eigenvalues of the error tensor being bounded above by $1$ in the case of Riemannian metrics was a consequence of positive definiteness, which is dropped here. Instead, we have the following result.
\begin{lemma}\label{lem:eigTS}
    The error tensor $T$ defined in equation \eqref{eq:main3} can not have an eigenvalue equal to 1.
\end{lemma}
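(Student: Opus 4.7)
The plan is a direct proof by contradiction that exploits the defining equation of $T$ together with the non-degeneracy of $g$, bypassing the positive-definiteness argument that was used in the Riemannian case (Lemma \ref{lem:Tprop}, part \ref{lem:Tprop2}).

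First, I would suppose for contradiction that $T$ admits $1$ as an eigenvalue at some point $p \in M$. Since $T$ is defined on the contact distribution $\ker(\eta)$, there exists a non-zero vector $v \in \ker(\eta_p)$ with $Tv = v$. Next, I would substitute this into equation \eqref{eq:main3}: for any $Y \in T_{\pi(p)} B$ with horizontal lift $\tilde{Y}$ at $p$, we have
\begin{equation*}
    \tilde{g}(v, \tilde{Y}) = \tilde{g}(Tv, \tilde{Y}) = \tilde{g}(v, \tilde{Y}) - g(d\pi\, v, Y),
\end{equation*}
so $g(d\pi\, v, Y) = 0$ for all $Y \in T_{\pi(p)} B$.

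Finally, I would invoke the two structural facts: (i) the derivative $d\pi$ restricted to the horizontal distribution $\ker(\eta)$ is a linear isomorphism onto $TB$, so $d\pi\, v$ is a well-defined tangent vector at $\pi(p)$; and (ii) the semi-Riemannian metric $g$ on $B$ is non-degenerate by definition. Combining these, $g(d\pi\, v, \cdot) \equiv 0$ on $T_{\pi(p)}B$ forces $d\pi\, v = 0$, which by (i) gives $v = 0$, a contradiction. The key observation is that even though $g$ may fail to be positive-definite (so the argument in Lemma \ref{lem:Tprop} that eigenvalues are strictly less than $1$ breaks down), the value $1$ remains obstructed because it corresponds to the entire row $g(d\pi v,\cdot)$ vanishing, not merely the diagonal entry $g(d\pi v, d\pi v)$. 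There is no real obstacle here; the lemma is essentially a bookkeeping remark that isolates what survives from the Riemannian analysis and what does not.
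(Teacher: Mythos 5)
Your proof is correct and follows essentially the same route as the paper: substitute an eigenvector of eigenvalue $1$ into equation \eqref{eq:main3} and derive that $g(d\pi\, v,\cdot)$ vanishes identically, contradicting the non-degeneracy of $g$. Your version merely makes explicit the intermediate step that $d\pi$ is an isomorphism on $\ker(\eta)$, which the paper leaves implicit.
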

\begin{proof}
    Suppose $\tilde{X}$ is an eigenvector of $T$ corresponding to the eigenvalue $1$. Then, by equation \eqref{eq:main3}, we get $\tilde{g}(\tilde{X},\tilde{Y}) = \tilde{g}(\tilde{X},\tilde{Y}) - g(X,Y)$, for all vectors $\tilde{Y}\in \Gamma(Ker(\eta))$. Thus, we have $g(X,Y)=0$, for all vectors $Y\in \Gamma(B)$, contradicting the non-degeneracy of the tensor $g$. Therefore, $T$ can not have an eigenvalue equal to $1$.
\end{proof}
Following an analysis similar to the one in the case of Riemannian metrics on the base space, we can conclude that the error tensor $T$ commutes with the tensor field $h$ if and only if the condition specified in \eqref{eq:basecon} holds. Hence, we investigate solutions $(T,g)$ to equation \eqref{eq:main3} for which $hT=Th$. Since the proof of Lemma \ref{lem:comphibasis} does not depend on the positive-definiteness of the metric tensor on the base, we can conclude that $h$ and $T$ share a common eigenbasis which is a local $\phi$-basis and the eigenvalues $\{\lambda_{i}\}_{1}^{2n}$ of $T$ satisfy the equation 
\begin{equation}
        (I_{M}-1)\lambda_{i}-(1+I_{M})\lambda_{n+i}+2=0.
\end{equation}
The equation of the hyperbola given by \eqref{eq:hyp} was derived using the fact that $J$ is an almost complex structure on the base space. However, this is not the case here since the base space is equipped with a $(1,1)$-tensor field $F$ which satisfies $F^{2}=1$. The proof of Lemma \ref{lem:hyp} can be modified to obtain the following result.
\begin{lemma}
    Let $(T,g)$ be a solution to equation \eqref{eq:main3}. Then, the eigenvalues $\{\lambda_{i}\}_{1}^{2n}$ satisfy the relations
    \begin{equation}\label{eq:hyp2}
        (1-\lambda_{i})(1-\lambda_{n+i})=-1,
    \end{equation}
    for all $i\in\{1,2,\dots ,n\}$.
\end{lemma}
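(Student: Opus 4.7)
The approach is to mirror the proof of Lemma \ref{lem:hyp} with the two substantive modifications forced by the semi-Riemannian setting: the conformal factor $e^{2f}$ no longer appears, because equation \eqref{eq:main3} has $g(X,Y)$ where \eqref{eq:main} had $e^{2f}g(d\pi X, d\pi Y)$, and the almost complex structure $J$ is replaced by the para-complex structure $F$ satisfying $F^{2} = I$ in place of $J^{2} = -I$. This sign flip is precisely what should produce the $-1$ on the right-hand side of \eqref{eq:hyp2}.

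First, combining $\pi^{*}\omega = d\eta$ with the compatibility $\omega(\cdot,\cdot) = g(\cdot, F\cdot)$ from Definition \ref{def:parak}, and then rewriting $g$ in terms of $\tilde{g}$ via \eqref{eq:main3}, I would derive the identity
\begin{equation*}
    \tilde{g}(v, \phi w) \;=\; g(d\pi v,\, F d\pi w) \;=\; \tilde{g}\bigl((I-T)v,\,\widetilde{F d\pi w}\bigr)
\end{equation*}
for all $v, w \in \Gamma(Ker(\eta))$. This is the direct analogue of equation \eqref{eq:eq2}, with the $e^{2f}$ factor absent.

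Next, working in the common orthonormal $\phi$-basis $\{e_{1},\dots,e_{n},\phi e_{1},\dots,\phi e_{n}\}$ of $h$ and $T$ provided by Lemma \ref{lem:comphibasis}, I would fix $w = e_{i}$ and vary $v$ across the basis. Since $\tilde{g}(v,\phi e_{i})$ vanishes for every basis vector except $v = \phi e_{i}$, and since Lemma \ref{lem:eigTS} guarantees that $1 - \lambda_{j} \neq 0$ for every $j$, the vector $\widetilde{F d\pi e_{i}}$ is forced to be a scalar multiple of $\phi e_{i}$; the case $v = \phi e_{i}$ then pins the scalar down to give $F f_{i} = (1-\lambda_{n+i})^{-1} f_{n+i}$. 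Repeating the calculation with $w = \phi e_{i}$, and using $\phi^{2} e_{i} = -e_{i}$ on $Ker(\eta)$ (which supplies the critical minus sign), yields $F f_{n+i} = -(1-\lambda_{i})^{-1} f_{i}$.

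The final step is simply to compute $F^{2} f_{i}$ and impose $F^{2} = I$, which immediately produces $(1-\lambda_{i})(1-\lambda_{n+i}) = -1$. The only real obstacle here is sign bookkeeping in the previous step: one must carefully track how the $-1$ produced by $\phi^{2} = -I$ on $Ker(\eta)$ propagates through the computation and, crucially, combines with $F^{2} = +I$ rather than cancelling against $J^{2} = -I$ as in the Riemannian case, thereby flipping the sign on the right-hand side of \eqref{eq:hyp2} relative to \eqref{eq:hyp}.
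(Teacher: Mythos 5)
Your proposal is correct and is exactly the argument the paper intends: the paper states only that ``the proof of Lemma \ref{lem:hyp} can be modified,'' and you carry out precisely that modification, dropping the conformal factor $e^{2f}$, replacing $J$ by $F$, using Lemma \ref{lem:eigTS} to divide by $1-\lambda_{j}$, and correctly tracking how the minus sign from $\phi^{2}=-I$ survives against $F^{2}=+I$ to yield the $-1$ in \eqref{eq:hyp2}. No gaps.
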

The line has slope equal to $\left( \frac{1+I_{M}}{I_{M}-1}\right)$, and passes through the point $(1,1)$. The line and the hyperbola have non-empty intersection only if the slope of the line is negative (see Figure \ref{fig:eigsolsemi} for a particular case). This is equivalent to the condition $\vert I_{M} \vert < 1$. Under this assumption on the index, the points of intersections of the line \eqref{eq:line} and hyperbola \eqref{eq:hyp2} are given by
\begin{equation}
    p_{1} \coloneqq \left( 1+\sqrt{\frac{1+I_{M}}{1-I_{M}}},1-\sqrt{\frac{1-I_{M}}{1+I_{M}}}\right)\text{ and }p_{2}\coloneqq \left( 1-\sqrt{\frac{1+I_{M}}{1-I_{M}}},1+\sqrt{\frac{1-I_{M}}{1+I_{M}}}\right).
\end{equation}
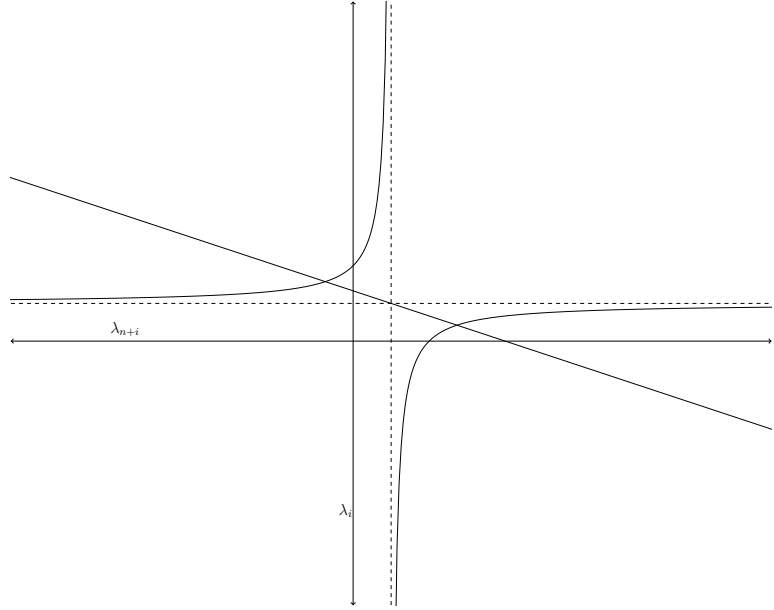
\begin{figure}[!htbp]
    \centering
        \begin{tikzpicture}[scale=0.5]
            \begin{axis}[
                x=1cm, y=1cm,
                axis lines=middle,
                axis line style={<->},
                ymin=-7, ymax=9,
                xmin=-9, xmax=11,
                yticklabel=\empty,
                xticklabel=\empty,
                xtick style = {draw=none},
                ytick style = {draw=none},
                xlabel = {$\lambda_{i}$},
                ylabel = {$\lambda_{n+i}$},
                x label style={anchor=west},
                y label style={anchor=south},
                label style = {font = \large}
                ]
                \addplot [
                    domain=-10:0.9,
                    samples=200
                ] {1+1/(1-x)};
                \addplot[
                    domain=1.1:12,
                    samples=200,
                ] {1+1/(1-x)};
                \addplot[
                    domain=-9:11,
                    dashed,
                    ]{1};
                \addplot [
                    mark=none,
                    dashed,
                    ] coordinates {(1, \pgfkeysvalueof{/pgfplots/ymin}) (1, \pgfkeysvalueof{/pgfplots/ymax})};
                \addplot[
                    domain=-12:12,
                    ]{(-x+4)/3};
            \end{axis}
        \end{tikzpicture}
    \caption{The graphs of curves determining the eigenvalues of $T$}
    \label{fig:eigsolsemi}
\end{figure}

Thus, for every $i\in\{1,2,\dots,n\}$, the ordered pair $(\lambda_{i},\lambda_{n+i})$ is either $p_{1}$ or $p_{2}$. Therefore, there are $2^{n}$ solutions for the (ordered) set of eigenvalues $\{\lambda_{i}\}_{i=1}^{2n}$ of $T$. In other words, there are $2^{n}$ many $(1,1)$-tensor fields that can appear in the solutions to the equation \ref{eq:main3}. Since these tensor fields are defined using a $\phi$-basis which is defined in a neighbourhood, each of the solutions is a smooth $(1,1)$-tensor field. Thus, we obtain $2^{n}$ semi-Riemannian metrics on $B$ given by the equation $g(X,Y) = \tilde{g}(\tilde{X}-T\tilde{X},\tilde{Y})$. These semi-Riemannian metrics define almost para-Kähler structures on the base space. These tensors can described locally, in terms of the basis $\{f_{i}\}_{i=1}^{2n}$, which is the image of the chosen $\phi$-basis, as follows. Let $S$ be a subset of $\{1,2,\dots , n\}$. Let $a_{0}=\sqrt{\frac{1+I_{M}}{1-I_{M}}}$. Define a $(1,1)$-tensor field $F_{S}$ on $B$ by the equations
\begin{flalign*}
    F_{S}f_{i}&=\begin{cases}
                    a_{0}f_{n+i}\:\:\text{, if }i\in S\\
                    -a_{0}f_{n+i}\:\:\text{, if }i\in \{1,2,\dots ,n\}\setminus S
                \end{cases}\\
    F_{S}f_{n+i}&=\begin{cases}
                    \frac{1}{a_{0}}f_{i}\:\:\text{, if }i\in S\\
                    -\frac{1}{a_{0}}f_{i}\:\:\text{, if }i\in \{1,2,\dots ,n\}\setminus S
                \end{cases}
\end{flalign*}
Also, define $(0,2)$-tensors $g_{S}$ by the equations
\begin{flalign*}
    g_{S}(f_{i},f_{i})&=\begin{cases}
                            -a_{0}\:\:\text{, if }i\in S\\
                            a_{0}\:\:\text{, if }i\in \{1,2,\dots ,n\}\setminus S
                        \end{cases}\\
    g_{S}(f_{n+i},f_{n+i})&=\begin{cases}
                                \frac{1}{a_{0}}\:\:\text{, if }i \in S\\
                                -\frac{1}{a_{0}}\:\:\text{, if }i\in \{1,2,\dots ,n\}\setminus S
                            \end{cases}\\
    g_{S}(f_{i},f_{j})&=0\:\:\text{, if }i\neq j
\end{flalign*}
One can check that $(B,\omega, g_{S},F_{S})$ is an almost para-Kähler structure for every $S$. The semi-Riemannian metrics corresponding to these almost para-Kähler structures are precisely the ones that arise as solutions to \eqref{eq:main3}.
The above analysis proves Theorem \ref{thm:main2}.
\begin{remark}
        \item If $(\lambda_{i},\lambda_{n+i})$ is chosen to be $p_{1}$ for every $i$, the solution corresponds to the choice $S=\{1,2,\dots, n\}$ in the above description. The para-contact structure corresponding to this almost para-Kähler structure was studied in \cite{SPS} and described as the canonical para-Sasakian structure associated with a non-Sasakian $(\kappa,\mu)$-manifold with index $\vert I_{M} \vert < 1$. Thus, the corresponding semi-Riemannian metric on the base space defines a para-Kähler structure.
\end{remark}
\bibliography{References}

\begin{thebibliography}{10}

\bibitem{BRCS}
David~E. Blair.
\newblock {\em Riemannian geometry of contact and symplectic manifolds}, volume
  203 of {\em Progress in Mathematics}.
\newblock Birkh\"{a}user Boston, Ltd., Boston, MA, second edition, 2010.

\bibitem{KUM}
David~E. Blair, Themis Koufogiorgos, and Basil~J. Papantoniou.
\newblock Contact metric manifolds satisfying a nullity condition.
\newblock {\em Israel J. Math.}, 91(1-3):189--214, 1995.

\bibitem{LPSCM}
E.~Boeckx.
\newblock A class of locally {$\phi$}-symmetric contact metric spaces.
\newblock {\em Arch. Math. (Basel)}, 72(6):466--472, 1999.

\bibitem{BI}
E.~Boeckx.
\newblock A full classification of contact metric {$(k,\mu)$}-spaces.
\newblock {\em Illinois J. Math.}, 44(1):212--219, 2000.

\bibitem{BWF}
W.~M. Boothby and H.~C. Wang.
\newblock On contact manifolds.
\newblock {\em Ann. of Math. (2)}, 68:721--734, 1958.

\bibitem{SG}
Charles~P. Boyer and Krzysztof Galicki.
\newblock {\em Sasakian geometry}.
\newblock Oxford Mathematical Monographs. Oxford University Press, Oxford,
  2008.

\bibitem{SPS}
Beniamino Cappelletti-Montano, Alfonso Carriazo, and Ver\'{o}nica
  Mart\'{\i}n-Molina.
\newblock Sasaki-{E}instein and para{S}asaki-{E}instein metrics from
  {$(\kappa,\mu)$}-structures.
\newblock {\em J. Geom. Phys.}, 73:20--36, 2013.

\bibitem{BLKU}
Beniamino Cappelletti~Montano and Luigia Di~Terlizzi.
\newblock Contact metric {$(\kappa,\mu)$}-spaces as bi-{L}egendrian manifolds.
\newblock {\em Bull. Aust. Math. Soc.}, 77(3):373--386, 2008.

\bibitem{SPG}
V.~Cruceanu, P.~Fortuny, and P.~M. Gadea.
\newblock A survey on paracomplex geometry.
\newblock {\em Rocky Mountain J. Math.}, 26(1):83--115, 1996.

\bibitem{GCT}
Hansj\"{o}rg Geiges.
\newblock {\em An introduction to contact topology}, volume 109 of {\em
  Cambridge Studies in Advanced Mathematics}.
\newblock Cambridge University Press, Cambridge, 2008.

\bibitem{SKC}
Yoji Hatakeyama.
\newblock Some notes on differentiable manifolds with almost contact
  structures.
\newblock {\em T\^{o}hoku Math. J. (2)}, 15:176--181, 1963.

\bibitem{LFCM}
Nicola Jayne.
\newblock Contact metric structures and {L}egendre foliations.
\newblock {\em New Zealand J. Math.}, 27(1):49--65, 1998.

\bibitem{LFC}
Paulette Libermann.
\newblock Legendre foliations on contact manifolds.
\newblock {\em Differential Geom. Appl.}, 1(1):57--76, 1991.

\bibitem{CFKU}
Eugenia Loiudice and Antonio Lotta.
\newblock Canonical fibrations of contact metric {$(\kappa,\mu)$}-spaces.
\newblock {\em Pacific J. Math.}, 300(1):39--63, 2019.

\bibitem{SRG}
Barrett O'Neill.
\newblock {\em Semi-{R}iemannian geometry}, volume 103 of {\em Pure and Applied
  Mathematics}.
\newblock Academic Press, Inc. [Harcourt Brace Jovanovich, Publishers], New
  York, 1983.
\newblock With applications to relativity.

\bibitem{LF}
Myung-Yull Pang.
\newblock The structure of {L}egendre foliations.
\newblock {\em Trans. Amer. Math. Soc.}, 320(2):417--455, 1990.

\bibitem{HCM}
Domenico Perrone.
\newblock Contact metric manifolds whose characteristic vector field is a
  harmonic vector field.
\newblock {\em Differential Geom. Appl.}, 20(3):367--378, 2004.

\bibitem{SSM}
Shigeo Sasaki.
\newblock {On differentiable manifolds with certain structures which are
  closely related to almost contact structure, I}.
\newblock {\em Tohoku Mathematical Journal}, 12(3):459 -- 476, 1960.

\end{thebibliography}
\section*{Statements and Declarations}
\begin{itemize}
    \item \textbf{Funding:} Author Sannidhi Alape was supported by CSIR grant \text{09/1020(0143)/2019-EMR-I, DST}, Government of India. Authors Atreyee Bhattacharya and Dheeraj Kulkarni declare that no funds, grants, or other support were received by them for the preparation of this manuscript.
    \item \textbf{Competing interests:} The authors have no relevant financial or non-financial interests to disclose.
    \item \textbf{Data availablility statement:} No data was used to derive the results in this article.
    \item \textbf{Authors' contributions:} All authors contributed to the study conception and design. The mathematical ideas contained in this article were contributed by Sannidhi Alape, Atreyee Bhattacharya and Dheeraj Kulkarni. The first draft of the manuscript was prepared jointly by all authors. All authors commented on previous versions of the manuscript. All authors read and approved the final manuscript.
\end{itemize}

\end{document}